\documentclass[twoside]{article}
\usepackage{graphics}
\usepackage{amscd,bbm,dsfont,amsmath,amssymb,mathrsfs,amsthm,epsfig,graphics}
\pagestyle{myheadings}
\makeindex
\theoremstyle{plain}
\newtheorem{thm}{Theorem}[section]
\newtheorem{lem}{Lemma}[section]
\newtheorem{prop}{Proposition}[section]
\newtheorem{cor}{Corollary}[section]
\newtheorem{defn}[thm]{Definition}
\theoremstyle{remark}
\newtheorem{rem}{Remark}[section]
\numberwithin{equation}{section}

 
 \DeclareMathOperator{\dist}{dist}
 \DeclareMathOperator{\p}{\partial}
 \DeclareMathOperator{\ls}{\leqslant}
 \DeclareMathOperator{\gs}{\geqslant}


 \newcommand{\e}{\epsilon}
 \newcommand{\ve}{\varepsilon}

 
\newcommand{\bra}[1]{\left(#1\right)}
\newcommand{\bbra}[1]{\big(#1\big)}
\newcommand{\Bbra}[1]{\Big(#1\Big)}
\newcommand{\squ}[1]{\left[#1\right]}
\newcommand{\bsqu}[1]{\big[#1\big]}
\newcommand{\Bsqu}[1]{\Big[#1\Big]}
\newcommand{\abs}[1]{\left\vert#1\right\vert}
\newcommand{\babs}[1]{\big\vert#1\big\vert}

\newcommand{\set}[1]{\left\{#1\right\}}
\newcommand{\bset}[1]{\big\{#1\big\}}
\newcommand{\Bset}[1]{\Big\{#1\Big\}}
\newcommand{\seq}[1]{\left<#1\right>}
\newcommand{\bseq}[1]{\big<#1\big>}

\newcommand{\norm}[1]{\left\Vert#1\right\Vert}



\def\XXint#1#2#3{{\setbox0=\hbox{$#1{#2#3}{\int}$}
     \vcenter{\hbox{$#2#3$}}\kern-.5\wd0}}



\textwidth=15cm \textheight=20cm \oddsidemargin=25pt\evensidemargin=25pt
\markboth{On sharp lower bound of the spectral gap for a Schr\"odinger operator}{Yue He}
\begin{document}
\title{\vspace{-1in}\parbox{\linewidth}{\footnotesize\noindent
}\vspace{\bigskipamount} \\
On sharp lower bound of the spectral gap for a Schr\"odinger operator and some related results
%
\thanks{{A Project Funded by the Priority Academic Program
Development of Jiangsu Higher Education Institutions;}
{Partially supported by NNSF grant of P. R. China: No.\,11171158.}
\hfil\break\indent
{\em Mathematics Subject Classifications 2010:}
Primary 35P15, 58C40, 65N25, 35J05, 58J05, 35B50.\hfil\break\indent
{\em Key words:} Schr\"odinger operator, Laplace operator, Spectral gap,
Ground state, Strictly convex domain, Diameter of domain.\hfil\break\indent
}}
\date{}
\author{Yue He}
\maketitle

\begin{abstract}
In this paper, we give an easy proof of the main results of Andrews and Clutterbuck's paper
[J. Amer. Math. Soc. 24 (2011), no. 3, 899--916],
which gives both a sharp lower bound for the spectral gap of a Schr\"oinger operator
and a sharp modulus of concavity for the logarithm of the corresponding first eigenfunction.
We arrive directly at same estimates by the `double coordinate' approach
and asymptotic behavior of parabolic flows.
Although using the techniques appeared in the above paper,
we partly simplify the method and argument.
This maybe help to provide an easy way for estimating spectral gap.
Besides, we also get a new lower bound of spectral gap for a class of Sch\"odinger operator.
\end{abstract}

\section{Introduction}
Let $\Omega\subset\mathbb{R}^n$ be a smooth strictly convex bounded domain with boundary
$\p\Omega$ and $V:\bar{\Omega}\mapsto\mathbb{R}$ a convex function.
Consider the following Dirichlet eigenvalue problem of the Schr\"{o}dinger operator
\begin{equation}\label{dep-s-11-6-8-20-58}
\left\{
\begin{array}{l}
-\Delta u+Vu=\lambda u\qquad \hbox{in}\quad\Omega, \\
u=0\qquad \hbox{on}\quad \p \Omega,
\end{array}
\right.
\end{equation}
According to \cite{Courant-Hilbert-1953,Singer-Wong-Yau-Yau-1985,Gilbarg-Trudinger},
problem \eqref{dep-s-11-6-8-20-58} has a countable discrete set of eigenvalues
\[
\{\lambda_i\,\,\big|\,\,\lambda_1<\lambda_2\ls\lambda_3\ls\cdots\nearrow\infty\},
\]
whose eigenfunctions $\{\phi_j\}$ span $W_0^{1,2}(\Omega)$.
Here $\phi_j$ is a normalized eigenfunction corresponding to $\lambda_j$.
In particular, the first eigenfunction $\phi_1$ and its corresponding eigenvalue $\lambda_1$ are called
the ground state and ground state energy, respectively.
The difference between the first two eigenvalues, $\lambda_2-\lambda_1$, is called the fundamental gap
(or `the spectral gap' for short). It is of great significance in quantum mechanics,
statistical mechanics and quantum field theory \cite{Andrews-Clutterbuck-2011-JAMS}.
The spectral gap also determines the rate at which positive solutions of the heat equation
are close in first eigenfunction,
and it is through this characterization that
one can prove the following conjecture \cite{Andrews-Clutterbuck-2011-JAMS}:
\begin{quote}
{\bf Gap Conjecture}.
\emph{Let $\Omega\subset\mathbb{R}^n$ be a bounded strictly convex domain with smooth boundary,
and $V$ a weakly convex potential. Then the eigenvalues of the Schr\"odinger operator satisfy}
\begin{equation}\label{spectral-gap}
\lambda_2-\lambda_1\gs \frac{3\pi^2}{d^2},
\end{equation}
\emph{where $d:=\sup_{x,y\in\Omega}\abs{y-x}$ is the diameter of $\Omega$.}
\end{quote}

We next introduce a notion appears on this paper.
\begin{defn}
Let $\tilde{V}\in C^1\big([-\frac{d}{2},\frac{d}{2}],\mathbb{R}\big)$ be an even function.
we say $\tilde{V}$ is a modulus of expansion for $\nabla V$ if the following inequality holds.
\begin{equation}\label{Modulus-of-convexity}
\bsqu{\nabla V(y)-\nabla V(x)}\cdot\frac{y-x}{\abs{y-x}}
\gs2\tilde{V}'\big(\frac{\abs{x-y}}{2}\big)
\qquad\hbox{for every}\quad x\neq y\in\Omega.
\end{equation}
In this situation, the function $\tilde{V}$ is also called a modulus of convexity of $V$.
\end{defn}
We also consider the Dirichlet eigenvalue problem of one dimensional Schr\"odinger operator:
\begin{equation}\label{1-dim-Schrodinger}
\left\{
\begin{array}{l}
-\tilde{u}''+\tilde{V}\tilde{u}=\tilde{\lambda}\tilde{u}
\qquad\hbox{in}\quad\big(-\frac{d}{2},\frac{d}{2}\big),\\[5pt]
\tilde{u}(-\frac{d}{2})=\tilde{u}(\frac{d}{2})=0,
\end{array}
\right.
\end{equation}
where $\tilde{V}\in C^1\big([-\frac{d}{2},\frac{d}{2}],\mathbb{R}\big)$
is a modulus of convexity of $V$.
Denote the corresponding eigenvalues (resp. eigenfunctions) by adding a tilde,
e.g. $\tilde{\lambda}_i$ and $\tilde{\phi}_i$, $i=1,2,\cdots$.

Now let us temporarily recall some classical results used below. It is well known
(cf. \cite{Courant-Hilbert-1953,Singer-Wong-Yau-Yau-1985,Schoen-Yau-1994,Yau-2003-IPSM}) that
\begin{description}
  \item[(i)]
The first eigenfunction $\phi_1$ is strictly positive in $\Omega$, so $\log\phi_1$ is well-defined in $\Omega$;
  \item[(ii)]
$D_\nu\phi_1\big|_{\p\Omega}=-c$ with $c>0$,
where $\nu$ is the outward normal direction of $\p\Omega$ with respect to $\Omega$;
  \item[(iii)]
Both the first two eigenfunctions $\phi_1$ and $\phi_2$ are smooth on $\bar{\Omega}$,
and the ratio $\phi_2/\phi_1$ can be extended to $\bar{\Omega}$ as a smooth function;
  \item[(iv)]
Direct computation yields that $\phi_2/\phi_1$ satisfies the following equation
\begin{equation}\label{Laplace-with-drift}
\Delta v+2\nabla\log\phi_1\cdot\nabla v=-(\lambda_2-\lambda_1)v\qquad\hbox{in}\quad\Omega.
\end{equation}
So it easily follows from \eqref{Laplace-with-drift} that $\phi_2/\phi_1$ satisfies
the Neumann boundary condition: $D_\nu v=0$ on $\p\Omega$.
In particular, in the case: $n=1$, $\tilde{\phi}_2/\tilde{\phi}_1$ satisfies the following equation
\begin{equation}\label{1-dim-Laplace-with-drift}
\tilde{v}''+2(\log\tilde{\phi}_1)'\tilde{v}'=-(\tilde{\lambda}_2-\tilde{\lambda}_1)\tilde{v}
\qquad\hbox{in}\quad\big(-\frac{d}{2},\frac{d}{2}\big),
\end{equation}
and the boundary condition: $\tilde{v}'(\pm\frac{d}{2})=0$.
\end{description}

Recently, the study of spectral gap, in particular, of the above gap conjecture,
have received a growing attention with many progresses since the early 1980s.
Historically, this conjecture motivated many related studies. Here we provide
a brief overview of that topic and outline just some of the important work carried out.

First let us take two special examples for Schr\"{o}dinger operator (cf. \cite{Henrot-2006}) here.
One is the case: $n=1$ and $\tilde{V}\equiv\textrm{const.}$. It is well-known that
its Dirichlet eigenvalues and corresponding eigenfunctions
on the interval like $(0,d)$, are as follows
\[
\tilde{\lambda}_k=\frac{(k\pi)^2}{d^2}+\tilde{V}\qquad\hbox{and}\qquad
\tilde{\phi}_k=\sin\frac{k\pi x}{d}\qquad\qquad (k\in\mathbb{N})
\]
respectively. Clearly, $\tilde{\lambda}_1=\pi^2/d^2+\tilde{V}$ and $\tilde{\lambda}_2=4\pi^2/d^2+\tilde{V}$, and
this gap is $3\pi^2/d^2$.
The other is the case: $n=2$ and $V\equiv\textrm{const.}$. Similarly,
its Dirichlet eigenvalues and corresponding eigenfunctions
on the rectangle $(0,a)\times(0,b)$, are:
\[
\lambda_{k,l}=\pi^2\bbra{\frac{k^2}{a^2}+\frac{l^2}{b^2}}+V\qquad\hbox{and}\qquad
\phi_{k,l}=\sin\bbra{\frac{k\pi x}{a}}\sin\bbra{\frac{l\pi y}{b}}\qquad\qquad (k,l\in\mathbb{N})
\]
respectively. Note that
\[
\lambda_1=\pi^2\bbra{\frac{1}{a^2}+\frac{1}{b^2}}+V\qquad\hbox{and}\qquad
\lambda_2=\pi^2\bbra{\frac{4}{a^2}+\frac{1}{b^2}}+V.
\]
Thus,
\[
\lambda_2-\lambda_1=\frac{3\pi^2}{a^2}\overset{or}=\frac{3\pi^2}{d^2-b^2}\gs\frac{3\pi^2}{d^2}.
\]
So, that is the reason $3\pi^2/d^2$ is usually regarded as sharp lower bound of the spectral gap.

It was observed by M. van den Berg (1983) \cite{van-den-Berg-1983} that $\lambda_2-\lambda_1\gs3\pi^2/d^2$
holds for many convex domains. This was also independently suggested by Yau (1986) \cite{Yau-1986-Monographies},
as well as Ashbaugh and Benguria (1989) \cite{Ashbaugh-Benguria-1989}, which is just the so-called fundamental gap conjecture above.

In the one dimension case: Ashbaugh and Benguria (1989) \cite{Ashbaugh-Benguria-1989} asserted that
the fundamental gap conjecture holds if $\tilde{V}$ is single-well and symmetric (not necessarily convex).
To some extent Horv\'{a}th (2003) \cite{Horvath-2003} removed the symmetry assumption,
allowing $\tilde{V}$ to be a single-well potential with minimum at the mid-point of the interval.
Lavine (1994) \cite{Lavine-1994} proved that the fundamental gap conjecture holds if $\tilde{V}$ is convex.

In the higher dimensions case: Yau et al (1985) \cite{Singer-Wong-Yau-Yau-1985}
used the fact that the first eigenfunction is logarithmic concave and combined the maximum principle method
originated by Li and Yau (e.g. \cite{Li-1979,Li-Yau-1983}) to obtain that the gap is bounded below by $\pi^2/(4d^2)$.
Later in \cite{Yau-1986-Monographies} (see also Appendix I and II of \cite{Schoen-Yau-1994}),
Yau further conjectured that one could improve the lower bound of the gap to $3\pi^2/d^2$, when $\Omega$ is an interval.

By sharpening Li and Yau's techniques (see e.g. \cite{Li-1979,Li-Yau-1983,Singer-Wong-Yau-Yau-1985}),
Yu and Zhong (1986) \cite{Yu-Zhong-1986} gave a more delicate gradient estimate,
which had been applied to improve the above estimate to $\pi^2/d^2$.

Of course, it is obvious that the optimal lower bound of $\lambda_2-\lambda_1$ would be perfect.
To improve previous results via the maximum principle method only,
one need to construct suitable test functions requiring more detailed technical work.
Ling (2008) \cite{Ling-2008-CAG} provided new estimates that improve
sequentially the lower bound of the gap to $\frac{\pi^2}{d^2}+\frac{31}{50}\alpha$,
with $\alpha=-\sup_\Omega\nabla^2(\log\phi_1)$.

Besides, the gap conjecture has been proved in a special domain case:
Ba\~{n}uelos and M\'{e}ndez-Hern\'{a}ndez (2000) \cite{Banuelos-Hernandez-2000}
(with potential $V=0$), Davis (2001) \cite{Davis-2001}, also Ba\~{n}elos and Kr\"{o}ger (2001)
\cite{Banelos-Kroger-2001} independently achieved that the fundamental gap conjecture holds if
$\Omega\subset \mathbb{R}^2$ is symmetric with respect to $x$ and $y$ axes, and convex in both $x$ and $y$.

As we already know, $\phi_2$ changes sign in $\Omega$.
But the problem is that in general, we do not know whether
$\phi_2$ is of symmetry, i.e. $\sup_{x\in\Omega}\phi_2=-\inf_{x\in\Omega}\phi_2$.
As a result, we also do not know whether $\phi_2/\phi_1$ is always symmetric.
That is why the above maximum principle method does not work well for getting
the optimal estimate of spectral gap. In spite of many efforts to this conjecture,
no effective method is available in the literature before. However, the situation have been changed by
Andrews and Clutterbuck (2011) \cite{Andrews-Clutterbuck-2011-JAMS}, which solve the above gap conjecture
via a differential method. 
Intuitively, their proof is probably considered as the best of all previous arguments.
In particular, their argument avoids any problems arising
from possible asymmetry mentioned above.
More precisely, they first prove an important `comparison' theorem, which
controls the modulus of continuity of solutions of a Neumann heat equation with drift term, in terms of
the modulus of contraction of the drift velocity. Their key step in proving the gap conjecture
is to establish a sharp log-concavity estimate for the first eigenfunction.
By such `comparison' theorem, they deduce that the desired sharp log-concavity estimate above.
Using this sharp estimate,
they achieve that the eigenvalue gap is bounded below by the gap of an associated Sturm-Liouville problem on
a closed interval via the study of the asymptotics to a parabolic problem, while the latter in the
weakly convex potential case is just $3\pi^2/d^2$, In this way the gap conjecture is eventually proved
(see \cite{Andrews-Clutterbuck-2011-JAMS}, Proposition 3.2 and Corollary 1.4 for more details).
In a word, their results were thought to be sharp in both of a lower bound for the spectral gap of
Schr\"oinger operator and a modulus of concavity for the logarithm of the corresponding first
eigenfunction in terms of the diameter of the domain and a modulus of convexity for the potential.

However, to author's acknowledge, there is rarely understanding in case of more general elliptic operator.
Somewhat later, Ni (2013) \cite{Lei-Ni-2013} developed a deep and meaningful analysis firstly
introduced by Andrews et al, for giving an alternate proof of the main results of
\cite{Andrews-Clutterbuck-2011-JAMS}, in the convex potential case. Moreover,
Ni also used his method to solve several problems of eigenvalue estimate.
In fact, Ni's method can be consider as an important generalization (or application)
of \cite{Andrews-Clutterbuck-2011-JAMS} in the analytic setting. More recently,
a similar technique was used in a more difficult case by Wolfson \cite{Wolfson-2012}
for the proof of a eigenvalue gap theorem.
He described how the approach due to \cite{Andrews-Clutterbuck-2011-JAMS,Lei-Ni-2013}
works well by estimating the eigenvalue gap for a class of nonsymmetric second order
linear elliptic operators. Taken together, these papers suggest a general approach
to estimating the eigenvalue gap of a large class of linear second-order elliptic
operators on convex domains.
Besides the above works, we ought to mention that the idea of estimating the spectral gap
using probabilistic methods (in particular, the Kendall--Cranston coupling method) was developed
by Chen and Wang in the early 1990s, see for instance \cite{Chen-Wang-1994,Chen-Wang-1997} and
Chapters 2 to 3  in \cite{Chen-Wang-2005}. Alternatively, Hsu also gave a short introduction
of this method in (\cite{Hsu-2002}, Section 6.7).
Along this direction, Gong et al (2014) \cite{Gong-Liu-Liu-Luo-2014} recently extended the spectral
gap comparison theorem of \cite{Andrews-Clutterbuck-2011-JAMS} to the infinite dimensional setting.
In a more recent paper \cite{Gong-Li-Luo-2013}, Gong et al gave a probabilistic proof to
the fundamental gap conjecture (in the convex potential case) via the coupling by a reflection.
For more information on spectral gap, the reader is usually referred to the original paper
\cite{Andrews-Clutterbuck-2011-JAMS}, and excellent literatures \cite{Lei-Ni-2013,Wolfson-2012,Yau-2003-IPSM,Yau-2008-MC,Gong-Liu-Liu-Luo-2014,Gong-Li-Luo-2013},
also the relevant references therein.
To sum up, with the rapid development of spectral geometry,
the study of that topic on this stage is getting more and more important.

Since the idea and structure of \cite{Lei-Ni-2013,Wolfson-2012} are all some close to
\cite{Andrews-Clutterbuck-2011-JAMS}, we want to find a comparatively easy proof (maybe which
is a little bit independent of \cite{Andrews-Clutterbuck-2011-JAMS}) to the gap conjecture.
In this paper we have two motivations: One is to understand more intuitively those proofs due to
\cite{Andrews-Clutterbuck-2011-JAMS,Lei-Ni-2013,Wolfson-2012}. The other is devoted to
provide a simplified method for solving the gap conjecture. Precisely,
we essentially use a `double coordinate' approach introduced by Clutterbuck (2004)
\cite{Clutterbuck-2004-phD-thesis} from the parabolic flows point of view. Unifying this approach
with the asymptotics of parabolic problem,
we arrive ultimately at same estimates of \cite{Andrews-Clutterbuck-2011-JAMS}.
Further, there is another difference in the reasoning between \cite{Andrews-Clutterbuck-2011-JAMS} and this paper,
that is, we do not use Sturm-Liouville theory of ODEs at all.
Although in some ways analogous to the strategy used in \cite{Andrews-Clutterbuck-2011-JAMS},
our proof looks more directly since we only use an argument based on the theory of parabolic flows. As a result,
we really reduce the method and complexity of argument to some degree.
This maybe help to provide an easy way of estimating spectral gap.
Besides, we also give a new lower bound of spectral gap for a class of Sch\"odinger operator
via a technique used in \cite{Shi-Zhang-2007,Qian-Zhang-Zhu-2012}.

As \cite{Andrews-Clutterbuck-2011-JAMS} previously pointed out,
one key step in estimating the spectral gap is to establish
a sharp log-concavity estimate of ground state.
So, one of our aim is to derive such a sharp estimate.
Let us now state this deeper result (cf. \cite[Theorem 1.5]{Andrews-Clutterbuck-2011-JAMS};
see also \cite{Lei-Ni-2013,Wolfson-2012,Gong-Li-Luo-2013} in convex potential case) as follows.
\begin{thm}\label{sharp-log-concavity-estimate}
Let $\Omega\subset\mathbb{R}^n$ be a bounded strictly convex domain with
smooth boundary. If the functions $V$ and $\tilde{V}$ are related by
\eqref{Modulus-of-convexity}, i.e., $\tilde{V}$ is a modulus of convexity of $V$.
Then the first eigenfunction $\phi_1$ of the Sch\"odinger operator with potential $V$, satisfies
\begin{equation}\label{est-13-11-23-20-56}
\bsqu{\nabla\log\phi_1(y)-\nabla\log\phi_1(x)}\cdot\frac{y-x}{\abs{y-x}}
\ls2(\log\tilde{\phi}_1)'\big(\frac{\abs{y-x}}{2}\big),
\end{equation}
for every $x\neq y$ in $\Omega$, where $\tilde{\phi}_1$ is the first eigenfunction of \eqref{1-dim-Schrodinger}.
\end{thm}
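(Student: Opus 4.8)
The plan is to prove the estimate \eqref{est-13-11-23-20-56} by a parabolic deformation argument in the spirit of Clutterbuck's ``double coordinate'' method, rather than by the elliptic comparison theorem of Andrews--Clutterbuck. The starting point is to recast the log-concavity estimate as a statement about a drift-diffusion flow. Set $w=\log\phi_1$, which is well-defined and smooth in $\Omega$ by items (i) and (iii) of the classical facts recalled above. One checks that $\phi_1>0$ and $-\Delta\phi_1+V\phi_1=\lambda_1\phi_1$ translate into the (elliptic) identity
\begin{equation}\label{w-eq}
\Delta w+\abs{\nabla w}^2=V-\lambda_1\qquad\hbox{in}\quad\Omega.
\end{equation}
Likewise $\tilde w:=\log\tilde\phi_1$ solves $\tilde w''+(\tilde w')^2=\tilde V-\tilde\lambda_1$ on $(-\tfrac d2,\tfrac d2)$, and $\tilde w$ is even with $\tilde w'<0$ on $(0,\tfrac d2)$ and $\tilde w'\to-\infty$ at the endpoint (this last from fact (ii), the Hopf-type boundary behaviour). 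I would regard \eqref{w-eq} as the stationary limit of the nonlinear parabolic equation $\p_t w=\Delta w+\abs{\nabla w}^2-V+\lambda_1$ (equivalently $\p_t\phi=\Delta\phi-V\phi+\lambda_1\phi$, the normalized heat flow, whose solutions converge to $\phi_1$ as $t\to\infty$ for generic positive initial data), and prove that the one-sided modulus inequality \eqref{est-13-11-23-20-56} is \emph{preserved} along this flow and holds in the limit.

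Concretely, introduce on $\bar\Omega\times\bar\Omega\times[0,\infty)$ the ``double coordinate'' quantity
\begin{equation}\label{Z-def}
Z(x,y,t):=\bsqu{\nabla w(y,t)-\nabla w(x,t)}\cdot\frac{y-x}{\abs{y-x}}-2\,\tilde w'\!\bbra{\frac{\abs{y-x}}{2}},
\end{equation}
and the goal is $Z\ls0$ for all $x\neq y$ once the flow has run to equilibrium. The strategy has three steps. (1) \emph{Initialization:} choose the initial datum so that $Z<0$ at $t=0$; since $\tilde w'\to-\infty$ at $\pm\tfrac d2$ while $\nabla w$ stays bounded, the right-hand comparison term in \eqref{Z-def} blows up as $\abs{y-x}\to d$, so the inequality is automatic near the ``diagonal at distance $d$''; near the true diagonal $x=y$ one uses that $Z$ extends continuously with value $\nabla^2 w(x)\big[\tfrac{y-x}{\abs{y-x}}\big]^{\otimes2}-2\tilde w''(0)$ and picks the initial $w$ (e.g. a multiple of $\tilde w$ composed with a linear function, or simply $w(\cdot,0)\equiv\log$ of a suitable bump) strictly log-concave enough. (2) \emph{Preservation:} suppose, for contradiction, that $t_0$ is the first time $Z$ reaches $0$, at an interior pair $(x_0,y_0)$ with $x_0\neq y_0$. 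At such a first touching point one has $\p_tZ\gs0$, $\nabla_{(x,y)}Z=0$, and $\nabla^2_{(x,y)}Z\ls0$; feeding the evolution equations for $\nabla w(x,t)$ and $\nabla w(y,t)$ (obtained by differentiating $\p_t w=\Delta w+\abs{\nabla w}^2-V+\lambda_1$) into $\p_tZ$ and using the convexity hypothesis \eqref{Modulus-of-convexity} on $V$ to control the potential terms, one derives $\p_tZ<0$ at $(x_0,y_0,t_0)$, a contradiction. (3) \emph{Passage to the limit:} since $w(\cdot,t)\to\log\phi_1$ in $C^2_{loc}(\Omega)$ as $t\to\infty$ (parabolic regularity plus convergence of the normalized heat flow to the ground state), the inequality $Z\ls0$ survives in the limit, which is exactly \eqref{est-13-11-23-20-56}.

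Two points require care and together form the main obstacle. First, the quantity $Z$ is defined on the non-compact, non-smooth set $\set{(x,y,t):x,y\in\bar\Omega}$ and degenerates both on the diagonal $x=y$ and at the boundary $\p\Omega\times\p\Omega$; one must show the first touching point cannot escape to the boundary. This is where strict convexity of $\Omega$ enters: if $x_0\in\p\Omega$ then the outward normal derivative of $\phi_1$ is a negative constant (fact (ii)), forcing $\abs{\nabla w}\to\infty$ there in a way that, combined with $\tilde w'\to-\infty$, keeps $Z$ strictly negative in a neighborhood of $\p\Omega\times\p\Omega$ and near $\p\Omega\times\Omega$; the boundary terms in the differentiated flow then have a favorable sign (a Hopf-lemma-type argument at the boundary, exactly the role played by the Neumann condition and strict convexity in \cite{Andrews-Clutterbuck-2011-JAMS}). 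Second, the computation of $\p_tZ$ at the touching point is delicate because the unit vector $e:=\tfrac{y-x}{\abs{y-x}}$ varies with $(x,y)$; one must choose a good set of test directions in the second-derivative condition $\nabla^2_{(x,y)}Z\ls0$ — the efficient choice is the ``antisymmetric'' direction $(\xi,\xi)$ for $\xi\perp e$ together with $(-e,e)$ along $e$ — so that the cross terms assemble precisely into the left-hand side of \eqref{Modulus-of-convexity} evaluated at $(x_0,y_0)$ and into $\tilde w''$ terms that match the one-dimensional equation for $\tilde w$. Once these two technical points are handled, the maximum-principle argument closes, and the elliptic estimate follows by the $t\to\infty$ limit; the remaining verifications (continuity of $Z$ across the diagonal, the precise form of the evolution of $\nabla w$, and the asymptotics of the normalized heat flow) are routine and I would only sketch them.
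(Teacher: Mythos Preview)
Your overall architecture matches the paper's exactly: flow $u_t=\Delta u-Vu$ (equivalently $w_t=\Delta w+|\nabla w|^2-V$), form the two-point quantity $Z$ as in \eqref{Z-def}, run a maximum-principle argument on $\Omega\times\Omega$ using the degenerate elliptic coupling operator (your choice of test directions $(\xi,\xi)$, $\xi\perp e$, and $(-e,e)$ is precisely what the operator $L$ in \eqref{def-13a-1-13a-23-21} encodes), and pass to $t\to\infty$ via the approximation lemma. So strategically you are on target.

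There is, however, one genuine gap: step (1), the initialization. You write that one ``picks the initial $w$ \ldots\ strictly log-concave enough,'' suggesting a bump or $\tilde w$ composed with a linear map. This is not enough. The comparison function $\Phi(X)=2(\log\tilde\phi_1)'(X/2)$ tends to $-\infty$ like $-4/(d-X)$ as $X\to d$ (Proposition \ref{1st-eigenfunc-est}), so the requirement $Z(\cdot,\cdot,0)<0$ forces $[\nabla\log u_0(y)-\nabla\log u_0(x)]\cdot\tilde X$ to diverge to $-\infty$ at the same rate whenever $|y-x|\to d$; mere strict log-concavity in the interior does not produce this boundary singularity, and neither of your suggested initial data does. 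The paper devotes all of Appendix B to this: it takes $u_0(x)=e^{-c_0|x|^2/2}\rho(x)^\kappa$ with $\rho=\mathrm{dist}(\cdot,\partial\Omega)$ and proves (Lemma \ref{lem-13-12-31-56}, Proposition \ref{prop-13a-1-2-12-1}, Corollary \ref{cor-13a-1-1-7-33}) that for $\kappa$ large enough --- chosen via a quantitative strict-convexity lemma --- the singularity of $\kappa\nabla\rho/\rho$ dominates that of $\Phi$. This construction, not the later maximum-principle step, is where most of the work in the paper lies. A secondary point: as written your step (2) yields only $\partial_tZ\ls0$ at a first touching point, not $<0$; the paper inserts an $\ve e^{Ct}$ penalty (with $C>2\sup\Phi'$) into $Z$ to force strict inequality, and you should do the same.
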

\begin{rem}
Obviously, the estimate \eqref{est-13-11-23-20-56} improves Brascamp and Lieb's result
\cite{Brascamp-Liep-1976} $($see also \cite{Singer-Wong-Yau-Yau-1985}$)$,
which showed that if $V$ is convex, then $\phi_1$ is log-concave, that is, $\nabla^2\log\phi_1\ls0$.
Moreover, \eqref{est-13-11-23-20-56} is also regarded as the sharp log-concavity estimate of ground state.
\end{rem}

Our main interests in this paper focus on the sharp lower bound of
spectral gap, and prove the following well-known comparison result
(see \cite[Theorem 1.3]{Andrews-Clutterbuck-2011-JAMS}).

\begin{thm}\label{spectral-gap-comparison-thm}
Under the same assumptions as Theorem \ref{sharp-log-concavity-estimate}.
Then the eigenvalues of the Sch\"odinger operator with potential $V$, satisfy
\[
\lambda_2-\lambda_1\gs\tilde{\lambda}_2-\tilde{\lambda}_1.
\]
The latter is just the spectral gap of problem \eqref{1-dim-Schrodinger}.
\end{thm}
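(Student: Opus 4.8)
The plan is to run two parabolic flows, one on $\Omega$ and one on the interval, whose long-time behaviour reveals $\lambda_2-\lambda_1$ and $\tilde\lambda_2-\tilde\lambda_1$ respectively, to make the oscillation of the first flow be dominated by the second for all time, and then to read off the inequality by comparing decay rates. First I would substitute $u=\phi_1 v$ in the Dirichlet heat equation $\p_t u=\Delta u-(V-\lambda_1)u$; using $\Delta\phi_1=(V-\lambda_1)\phi_1$ this becomes the drift heat equation $\p_t v=\Delta v+2\nabla\log\phi_1\cdot\nabla v=:Lv$ in $\Omega$, and by items (ii)--(iv) of the introduction $v$ satisfies the Neumann condition $D_\nu v=0$ on $\p\Omega$. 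The operator $-L$ is self-adjoint with discrete spectrum on $L^2(\Omega,\phi_1^2\,dx)$, with bottom eigenvalue $0$ (constants) and next eigenvalue $\lambda_2-\lambda_1$, eigenfunction $\phi_2/\phi_1$; equation \eqref{1-dim-Laplace-with-drift} plays the same role for $\tilde L\tilde v=\tilde v''+2(\log\tilde\phi_1)'\tilde v'$ on $(-\tfrac d2,\tfrac d2)$. Fix smooth data $v_0$ on $\bar\Omega$ with $\int_\Omega v_0\phi_1^2\,dx=0$ and with nonzero $\phi_2/\phi_1$-component; after a sign change, parabolic regularity and the spectral decomposition of $e^{tL}$ give, in $C^1(\bar\Omega)$,
\[
e^{(\lambda_2-\lambda_1)t}\,v(\cdot,t)\To a\,\frac{\phi_2}{\phi_1},\qquad a>0
\]
(if the eigenvalue is multiple, use the spectral projection; nothing changes).

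\textbf{The one-dimensional barrier.} Next I choose an odd, smooth, strictly increasing $\tilde v_0$ on $[-\tfrac d2,\tfrac d2]$ with $\tilde v_0'(\pm\tfrac d2)=0$ and $v_0(y)-v_0(x)\ls 2\tilde v_0\bbra{\tfrac{\abs{y-x}}{2}}$ for all $x,y\in\bar\Omega$; this is possible since $v_0$ is Lipschitz, $\operatorname{diam}\Omega=d$, and one may take $\tilde v_0'(0)>0$ (for instance a large multiple of $\tilde\phi_2/\tilde\phi_1$, or of $\sin(\pi s/d)$ smoothed at the endpoints). Let $\tilde v$ solve $\p_t\tilde v=\tilde L\tilde v$ with this datum and $\tilde v'(\pm\tfrac d2,t)=0$. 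The reflection symmetry $s\mapsto -s$ of $\tilde L$ keeps $\tilde v(\cdot,t)$ odd; differentiating the equation, $w=\tilde v'$ solves a linear equation with the \emph{Dirichlet} condition $w(\pm\tfrac d2,t)=0$, so $w>0$ is preserved on the open interval and $\tilde v(\cdot,t)$ stays strictly increasing, hence $\tilde v\gs0$ on $[0,\tfrac d2]$. Being odd, $\tilde v_0$ is automatically orthogonal to constants in $L^2(\tilde\phi_1^2\,ds)$, so $e^{(\tilde\lambda_2-\tilde\lambda_1)t}\tilde v(\cdot,t)\To\tilde a\,\tilde\phi_2/\tilde\phi_1$ with $\tilde a\neq0$ (the $\tilde\phi_2/\tilde\phi_1$-component is nonzero because $\tilde v(\cdot,t)$ changes sign exactly once whereas the higher eigenfunctions of $-\tilde L$ oscillate more). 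If one instead takes $\tilde v_0$ proportional to $\tilde\phi_2/\tilde\phi_1$, then $\tilde v(\cdot,t)=Ce^{-(\tilde\lambda_2-\tilde\lambda_1)t}\tilde\phi_2/\tilde\phi_1$ exactly and this step is immediate.

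\textbf{The parabolic comparison (the crux).} The key claim is that the oscillation bound propagates:
\[
v(y,t)-v(x,t)\ \ls\ 2\,\tilde v\!\bbra{\frac{\abs{y-x}}{2}}\qquad\text{for all }x,y\in\bar\Omega,\ t\gs0 .
\]
This is the only hard step, and the main obstacle of the whole proof. I would argue on the doubled parabolic domain $\bar\Omega\times\bar\Omega\times[0,T]$ with $Z(x,y,t)=v(y,t)-v(x,t)-2\tilde v(\abs{y-x}/2,t)$ (Clutterbuck's ``double coordinate'' device), perturb by $\e e^{Kt}$, and examine a first zero of $Z$. At an interior maximum in $(x,y)$ the first-derivative test forces the tangent directions of $v$ at $x$ and $y$ to align with $(y-x)/\abs{y-x}$, and then the equations for $v$ and $\tilde v$ together with the second-derivative test reduce the sign of $\p_t Z$ to the inequality
\[
\bsqu{2\nabla\log\phi_1(y)-2\nabla\log\phi_1(x)}\cdot\frac{y-x}{\abs{y-x}}\ \ls\ 4(\log\tilde\phi_1)'\!\bbra{\frac{\abs{y-x}}{2}},
\]
which is exactly Theorem~\ref{sharp-log-concavity-estimate}. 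Boundary points are excluded using the strict convexity of $\Omega$ together with the Neumann conditions $D_\nu v=0$ and $\tilde v'(\pm\tfrac d2)=0$: a short computation shows that at $x\in\p\Omega$ the outward normal derivative $\p_{\nu_x}Z=-\tilde v'(\abs{x-y}/2)\,\tfrac{(x-y)\cdot\nu_x}{\abs{x-y}}<0$, contradicting maximality. Sending $\e\downarrow0$ and $T\to\infty$ yields the claim. The genuine difficulty is purely in this step — arranging the doubling so that precisely the quantity controlled by Theorem~\ref{sharp-log-concavity-estimate} appears, and disposing of the boundary terms.

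\textbf{Conclusion by comparing decay rates.} Since $\phi_2/\phi_1$ is nonconstant (otherwise $\phi_2$ would be a multiple of $\phi_1$), pick $x_0,y_0\in\bar\Omega$ with $(\phi_2/\phi_1)(y_0)>(\phi_2/\phi_1)(x_0)$. Evaluating the inequality of the previous step at $(x_0,y_0)$, multiplying by $e^{(\lambda_2-\lambda_1)t}$, and letting $t\to\infty$ with the asymptotics above,
\[
a\,\bsqu{\frac{\phi_2}{\phi_1}(y_0)-\frac{\phi_2}{\phi_1}(x_0)}\ \ls\ \limsup_{t\to\infty}\ 2\tilde a\,\frac{\tilde\phi_2}{\tilde\phi_1}\!\bbra{\frac{\abs{y_0-x_0}}{2}}\;e^{-[(\tilde\lambda_2-\tilde\lambda_1)-(\lambda_2-\lambda_1)]t}.
\]
The left-hand side is a fixed positive number; if $\lambda_2-\lambda_1<\tilde\lambda_2-\tilde\lambda_1$ the right-hand side is $0$, a contradiction. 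Hence $\lambda_2-\lambda_1\gs\tilde\lambda_2-\tilde\lambda_1$, which is the assertion of Theorem~\ref{spectral-gap-comparison-thm}.
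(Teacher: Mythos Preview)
Your plan is essentially the paper's own proof: the double-coordinate quantity $Z=v(y,t)-v(x,t)-(\text{1D barrier})$, the degenerate-elliptic coupling operator for the second-order test, Theorem~\ref{sharp-log-concavity-estimate} (together with $\tilde v'>0$) to absorb the drift term, strict convexity plus the Neumann condition to rule out boundary maxima, and a large-$t$ decay-rate comparison to finish. The paper is slightly more economical in that it takes the \emph{exact} eigenflows from the outset, $v=\delta_0\,(\phi_2/\phi_1)\,e^{-(\lambda_2-\lambda_1)t}$ on $\Omega$ and $e^{-(\tilde\lambda_2-\tilde\lambda_1)t}(\tilde\phi_2/\tilde\phi_1)\bbra{\tfrac{|y-x|}{2}}$ on the interval (your parenthetical ``instead take $\tilde v_0$ proportional to $\tilde\phi_2/\tilde\phi_1$'' is exactly this idea, applied on both sides), so no spectral asymptotics, no choice of generic $v_0$, and no monotonicity-preservation argument for $\tilde v$ are needed; the endgame is simply that $\phi_2/\phi_1$ would be constant. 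One small point you glossed over: your boundary inequality $\p_{\nu_x}Z<0$ breaks down when $|x-y|=d$, since then $\tilde v'(d/2)=0$; the paper treats this case separately by noting that then $\nabla_xZ=\nabla_yZ=0$ (the normal part by your computation, the tangential part by maximality on $\bar\Omega\times\bar\Omega$) and rerunning the interior second-order argument verbatim.
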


In many special cases (such as that in \cite{van-den-Berg-1983,Ashbaugh-Benguria-1989,Horvath-2003,Lavine-1994}, etc.),
it happens that $\tilde{\lambda}_2-\tilde{\lambda}_1$ is $3\pi^2/d^2$. Therefore,
we are not difficult to obtain the following conclusion.
\begin{cor}\label{spectral-gap-cor}
Under the same assumptions as Theorem \ref{sharp-log-concavity-estimate},
but $\tilde{V}(s)$ also satisfies certain conditions so that the spectral gap
$\tilde{\lambda}_2-\tilde{\lambda}_1$ of problem \eqref{1-dim-Schrodinger}
is just $3\pi^2/d^2$. Then the eigenvalues of the Sch\"odinger operator
with potential $V$, satisfy
\[
\lambda_2-\lambda_1\gs\frac{3\pi^2}{d^2}.
\]
\end{cor}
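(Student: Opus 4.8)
The plan is to read the corollary off Theorem \ref{spectral-gap-comparison-thm} directly, the only extra ingredient being the elementary spectral computation that makes the one-dimensional comparison gap equal to $3\pi^2/d^2$. So the first step is simply to invoke Theorem \ref{spectral-gap-comparison-thm}: for any potential $\tilde V$ related to $V$ by \eqref{Modulus-of-convexity} it already yields $\lambda_2-\lambda_1\gs\tilde\lambda_2-\tilde\lambda_1$, where $\tilde\lambda_1<\tilde\lambda_2$ are the first two Dirichlet eigenvalues of problem \eqref{1-dim-Schrodinger} on the interval $\big(-\tfrac d2,\tfrac d2\big)$ with $d=\sup_{x,y\in\Omega}\abs{y-x}$. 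Hence it suffices to produce one admissible $\tilde V$ for which $\tilde\lambda_2-\tilde\lambda_1=3\pi^2/d^2$ (or, more weakly, $\gs 3\pi^2/d^2$), which is exactly what the hypothesis ``$\tilde V(s)$ satisfies certain conditions'' is asking us to assume.

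Second, I would check that this hypothesis is non-vacuous and record the model case that motivates it. If $V$ is weakly convex, then the left-hand side of \eqref{Modulus-of-convexity} is nonnegative for all $x\ne y\in\Omega$; choosing $\tilde V$ to be any constant makes the right-hand side vanish identically, so a constant function --- which is even and of class $C^1$ on $\big[-\tfrac d2,\tfrac d2\big]$ --- is a legitimate modulus of convexity of $V$. For such $\tilde V$, problem \eqref{1-dim-Schrodinger} becomes $-\tilde u''=(\tilde\lambda-\tilde V)\tilde u$ on $\big(-\tfrac d2,\tfrac d2\big)$ with Dirichlet endpoints, whose eigenvalues are $\tilde\lambda_k=k^2\pi^2/d^2+\tilde V$, so that $\tilde\lambda_2-\tilde\lambda_1=3\pi^2/d^2$, exactly as already computed for the interval in the Introduction. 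The same value (or at least the inequality $\gs 3\pi^2/d^2$) also holds for the broader single-well symmetric classes treated in \cite{Ashbaugh-Benguria-1989,Horvath-2003,Lavine-1994}; these are the ``certain conditions'' alluded to in the statement.

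Finally, combining the two steps --- pick such a $\tilde V$, apply Theorem \ref{spectral-gap-comparison-thm}, and substitute $\tilde\lambda_2-\tilde\lambda_1=3\pi^2/d^2$ --- gives $\lambda_2-\lambda_1\gs 3\pi^2/d^2$, as claimed. There is essentially no analytic obstacle left at this stage: all of the substance has been absorbed into Theorem \ref{spectral-gap-comparison-thm}, and through it into the sharp log-concavity estimate of Theorem \ref{sharp-log-concavity-estimate}. The only point requiring a little care is bookkeeping --- verifying that the prescribed $\tilde V$ genuinely meets the evenness and $C^1$ regularity required of a modulus of convexity, that it satisfies inequality \eqref{Modulus-of-convexity} for the given $V$, and that the diameter $d$ appearing in \eqref{1-dim-Schrodinger} is the same $d=\sup_{x,y\in\Omega}\abs{y-x}$ that appears in the target bound; each of these is immediate.
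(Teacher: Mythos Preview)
Your proposal is correct and matches the paper's own argument essentially verbatim: the paper simply notes that under the stated hypothesis Theorem~\ref{spectral-gap-comparison-thm} gives $\lambda_2-\lambda_1\gs\tilde\lambda_2-\tilde\lambda_1=3\pi^2/d^2$, and illustrates with the same model case $\tilde V\equiv\textrm{const.}$ when $V$ is convex. Your additional remarks on verifying evenness, $C^1$ regularity, and the diameter bookkeeping are fine but go beyond what the paper records.
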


\begin{rem}
As far as we know, `weakly convex potential' $V$ in the paper \cite{Andrews-Clutterbuck-2011-JAMS},
is essentially a generalized convex function possessing weak derivatives in some sense.
As a result, Corollary \ref{spectral-gap-cor} obviously implies that
the previous gap conjecture is true.
\end{rem}

As an application of Theorem \ref{spectral-gap-comparison-thm},
we argue as \cite{Shi-Zhang-2007,Qian-Zhang-Zhu-2012} and obtain the following result:
\begin{cor}\label{cor-new-esti-of-gap}
Assume everything is as in Theorem \ref{sharp-log-concavity-estimate}. Then
the eigenvalues of the Sch\"odinger operator with potential $V$, satisfy
\begin{equation}\label{esti-14-5-26-11-16}
\lambda_2-\lambda_1\gs4s(1-s)\frac{\pi^2}{d^2}
+2s\tilde{\alpha}\qquad\hbox{for all}\quad s\in (0,1),
\end{equation}
where $\tilde{\alpha}:=-\sup_{\tau\in(-\frac{d}{2},\frac{d}{2})}\big(\log\tilde{\phi}_1\big)''(\tau)$.
In particular, if let $s=1/2$, then the above estimate becomes
\begin{equation}\label{est-14-6-17-19-20}
\lambda_2-\lambda_1\gs\frac{\pi^2}{d^2}+\tilde{\alpha}.
\end{equation}
\end{cor}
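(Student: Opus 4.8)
The plan is to deduce Corollary \ref{cor-new-esti-of-gap} from Theorem \ref{spectral-gap-comparison-thm} by exhibiting a good lower bound for the one-dimensional gap $\tilde\lambda_2-\tilde\lambda_1$ of problem \eqref{1-dim-Schrodinger}, following the test-function technique of \cite{Shi-Zhang-2007,Qian-Zhang-Zhu-2012}. By Theorem \ref{spectral-gap-comparison-thm} it suffices to prove
\[
\tilde\lambda_2-\tilde\lambda_1\gs 4s(1-s)\frac{\pi^2}{d^2}+2s\tilde\alpha\qquad\hbox{for all }s\in(0,1),
\]
where $\tilde\alpha=-\sup(\log\tilde\phi_1)''$. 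So the entire problem is reduced to a one-dimensional eigenvalue estimate for the operator $\tilde L\tilde v=\tilde v''+2(\log\tilde\phi_1)'\tilde v'$ on $(-\tfrac d2,\tfrac d2)$ with Neumann conditions at the endpoints; recall from item (iv) that $\tilde\phi_2/\tilde\phi_1$ is precisely an eigenfunction of $-\tilde L$ with eigenvalue $\tilde\lambda_2-\tilde\lambda_1$, and that this is the first nonzero Neumann eigenvalue (the zeroth being $0$, with constant eigenfunction, since $\tilde L$ is self-adjoint with respect to the weight $\tilde\phi_1^2\,d\tau$).

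First I would set up the Rayleigh quotient characterization
\[
\tilde\lambda_2-\tilde\lambda_1=\inf\Bset{\frac{\int_{-d/2}^{d/2}(\psi')^2\,\tilde\phi_1^2\,d\tau}{\int_{-d/2}^{d/2}\psi^2\,\tilde\phi_1^2\,d\tau}\ :\ \int_{-d/2}^{d/2}\psi\,\tilde\phi_1^2\,d\tau=0,\ \psi\not\equiv0}.
\]
Next, following \cite{Shi-Zhang-2007,Qian-Zhang-Zhu-2012}, the idea is to substitute $\psi=\tilde\phi_1^{-s}\cdot w$ for a suitable $s\in(0,1)$ and a function $w$ to be chosen, turning the weighted Rayleigh quotient into one governed by a \emph{new} Schr\"odinger-type operator. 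Expanding $(\psi')^2\tilde\phi_1^2$ and integrating by parts, one finds that the numerator becomes $\int\big[(w')^2+Q\,w^2\big]d\tau$ up to boundary terms, with an effective potential $Q$ expressed through $(\log\tilde\phi_1)'$ and $(\log\tilde\phi_1)''$; the cross terms are arranged to produce exactly the combination $s(1-s)\big((\log\tilde\phi_1)'\big)^2 - s(\log\tilde\phi_1)''$, and using $(\log\tilde\phi_1)''\ls-\tilde\alpha$ one bounds $Q$ below by $s\tilde\alpha$ plus a nonnegative term. The denominator transforms similarly; with the right choice the weight is eliminated and one is left comparing against the plain Dirichlet/Neumann problem for $-w''$ on an interval of length $d$, whose relevant eigenvalue supplies the $4s(1-s)\pi^2/d^2$ (this is where the factor $4s(1-s)$, i.e. the gap-type quantity for the substituted problem, enters — essentially $\pi^2/d^2$ for the first nonconstant mode, scaled). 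Finally, taking $s=1/2$ gives $4\cdot\tfrac12\cdot\tfrac12=1$ and $2\cdot\tfrac12=1$, yielding \eqref{est-14-6-17-19-20}.

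The main obstacle I expect is the bookkeeping around the boundary terms and the admissibility of the trial function: after the substitution $\psi=\tilde\phi_1^{-s}w$, one must check that $\psi$ is a legitimate test function for the Neumann Rayleigh quotient (in particular that the orthogonality constraint $\int\psi\,\tilde\phi_1^2=0$ can be met by choosing $w$ appropriately, e.g. $w$ odd or with a prescribed first Fourier mode) and that no boundary contributions are lost at $\pm\tfrac d2$ — here the Neumann condition $\tilde v'(\pm\tfrac d2)=0$ from item (iv) and the behavior of $\tilde\phi_1$ near the endpoints (where $\tilde\phi_1$ vanishes, so $\tilde\phi_1^{-s}$ blows up) must be controlled. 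One should verify that the negative power $s<1$ keeps everything integrable and that the integration-by-parts steps are justified; once that analytic care is taken, the algebraic identity producing $s(1-s)\big((\log\tilde\phi_1)'\big)^2-s(\log\tilde\phi_1)''$ is a routine completion-of-square computation.
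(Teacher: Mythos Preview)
Your reduction to the one-dimensional problem via Theorem \ref{spectral-gap-comparison-thm} is exactly right, and so is the identification of the target inequality $\tilde\lambda_2-\tilde\lambda_1\gs 4s(1-s)\pi^2/d^2+2s\tilde\alpha$. The method you sketch for that inequality, however, does not match the actual argument and has a gap as written. With the substitution $\psi=\tilde\phi_1^{-s}w$ the weight in the Rayleigh quotient becomes $\tilde\phi_1^{2-2s}$, not $1$; nothing cancels, so you are not ``left comparing against the plain Dirichlet/Neumann problem for $-w''$''. Consequently the promised appearance of $4s(1-s)\pi^2/d^2$ from an unweighted Poincar\'e constant is unexplained, and the boundary analysis you flag as an obstacle is in fact a symptom of the substitution not being the right one.

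The paper's argument (Lemma \ref{lem-new-esti-of-gap}) does not use a trial function at all: it works directly with the genuine eigenfunction $\tilde v=\tilde\phi_2/\tilde\phi_1$. One differentiates \eqref{1-dim-Laplace-with-drift} to obtain
\[
\tilde v'''+2(\log\tilde\phi_1)'\tilde v''+\big[2(\log\tilde\phi_1)''+(\tilde\lambda_2-\tilde\lambda_1)\big]\tilde v'=0,
\]
multiplies by $(\tilde v')^{a-1}$ with $a>1$ and $s:=1-1/a$, and integrates over $(-d/2,d/2)$. Integration by parts (using the Neumann condition $\tilde v'(\pm d/2)=0$, which kills all boundary terms and makes $(\tilde v')^{a/2}$ Dirichlet) gives
\[
4s(1-s)\int\big[\big((\tilde v')^{a/2}\big)'\big]^2
=\int\big[2s(\log\tilde\phi_1)''+(\tilde\lambda_2-\tilde\lambda_1)\big]\big((\tilde v')^{a/2}\big)^2,
\]
the coefficient $4s(1-s)=4(a-1)/a^2$ coming from the identity $(a-1)(\tilde v')^{a-2}(\tilde v'')^2=\tfrac{4(a-1)}{a^2}\big[\big((\tilde v')^{a/2}\big)'\big]^2$. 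Now bound $(\log\tilde\phi_1)''\ls -\tilde\alpha$ and apply the ordinary Wirtinger inequality $\int(f')^2\gs(\pi^2/d^2)\int f^2$ to $f=(\tilde v')^{a/2}$, which vanishes at the endpoints. This yields $4s(1-s)\pi^2/d^2\ls(\tilde\lambda_2-\tilde\lambda_1)-2s\tilde\alpha$, and the corollary follows from Theorem \ref{spectral-gap-comparison-thm}. So the ``power'' in the Shi--Zhang trick goes on $\tilde v'$, not on $\tilde\phi_1$, and the Dirichlet (not Neumann) Poincar\'e constant is what produces $\pi^2/d^2$.
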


\begin{rem}
It seems that \eqref{est-14-6-17-19-20} improves Ling's result \cite{Ling-2008-CAG}:
$\lambda_2-\lambda_1\gs\frac{\pi^2}{d^2}+\frac{31}{50}\alpha$, with $\alpha=-\sup_\Omega\nabla^2(\log\phi_1)$.
\end{rem}


Throughout this paper we always assume that $\Omega$ has smooth boundary and is uniformly convex
and that $V$ is smooth. The results for the general case of convex $\Omega$ and $V$ are still valid,
and follow using a straightforward approximation argument as described in
\cite{Henrot-Pierre-2005} (see also \cite[Theorem 2.3.17]{Henrot-2006}).

The rest of this paper is organized as follows. In Section 2, we fix notations, and 
establish necessary lemmas and formulas needed below. In Section 3,
we intend to give a direct elementary proof of Theorem \ref{sharp-log-concavity-estimate}
via the `double coordinate' approach and asymptotic behavior of a parabolic flow.
With Theorem \ref{sharp-log-concavity-estimate} in hand, by the similar technique as in 
Theorem \ref{sharp-log-concavity-estimate}, we prove Theorem \ref{spectral-gap-comparison-thm} in Section 4.
In section 5, we discuss a special example concerning the fundamental gap conjecture.
As an application of Theorem \ref{spectral-gap-comparison-thm}, we also offer another
lower bound of the spectral gap for the problem \eqref{dep-s-11-6-8-20-58}.
For the reader's convenience, we give some important properties
(used in other parts of this paper) of two special functions in Appendix A.
Finally, in order to prove Theorem \ref{sharp-log-concavity-estimate} in Section 3,
we also need to construct an auxiliary function via a distance function in Appendix B.

\section{Notations and preliminaries}
Throughout this paper, we shall exclusively use the following notations (cf. \cite{Lu-Seminar-May-26-2011}):
\[
X:=\abs{y-x},
\]
where $\abs{\cdot}$ denotes the Euclidean norm on $\mathbb{R}^n$;
\[
X_i:=\frac{\p X}{\p y_i}\overset{\textrm{or}}
=-\frac{\p X}{\p x_i}=\frac{y_i-x_i}{\abs{y-x}}\qquad\hbox{for}\quad i=1,\cdots,n;
\]
\[
X_{ij}:=\frac{\p X_i}{\p y_j}\overset{\textrm{or}}=-\frac{\p X_i}{\p x_j}
=\frac{1}{X}(\delta_{ij}-X_iX_j)\qquad\hbox{for}\quad i,j=1,\cdots,n;
\]
\begin{eqnarray}\label{eq-13a-1-5-10-39}
X_{ijk}&:=&\frac{\p X_{ij}}{\p y_k}\overset{\textrm{or}}=-\frac{\p X_{ij}}{\p x_k}\nonumber\\
&=&\frac{1}{X}(-X_{ik}X_j-X_iX_{jk})-\frac{1}{X^2}(\delta_{ij}-X_iX_j)X_k\nonumber\\
&=&-\frac{1}{X}(X_{ik}X_j+X_iX_{jk}+X_{ij}X_k);
\end{eqnarray}
and
\[
\tilde{X}:=\nabla_yX\overset{\textrm{or}}=-\nabla_xX=\frac{y-x}{\abs{y-x}}=(X_1,\cdots,X_n).
\]
It is not hard to verify that
\[
\frac{\p}{\p x_i}X=-X_i,\qquad\frac{\p}{\p y_j}X=X_j,
\qquad\frac{\p^2X}{\p x_i\p y_j}=-X_{ij},
\]
and
\[
\frac{\p}{\p x_k}X_{ij}=-X_{ijk},\qquad\frac{\p}{\p y_k}X_{ij}=X_{ijk}.
\]

Some simple properties about the above terms can be summarized as follows:
\[
\sum_i(X_i)^2=1;
\]
\[
X_{ij}=X_{ji},\qquad\hbox{for}\quad\forall\,\,i,j=1,\cdots,n;
\]
\[
X_{ijk}=X_{jki}=X_{kij},\qquad\hbox{for}\quad\forall\,\,i,j,k=1,\cdots,n;
\]
and
\begin{equation}\label{eq-13a-1-5-10-51}
\sum_iX_iX_{ij}=0,\qquad\hbox{for}\quad\forall\,\,j=1,\cdots,n.
\end{equation}
Indeed, noting $\sum_i(X_i)^2=1,$ we can check the last identity in the following:
\begin{eqnarray*}
\sum_iX_iX_{ij}&=&\frac{1}{X}\sum_iX_i\bra{\delta_{ij}-X_iX_j}\\
&=&\frac{1}{X}\Big\{X_j-\sum_i(X_i)^2\cdot X_j\Big\}\\
&=&\frac{1}{X}(X_j-1\cdot X_j)=0.
\end{eqnarray*}

Let us define an coupling operator as follows (cf. \cite{Lu-Seminar-May-26-2011}):
\begin{eqnarray}\label{def-13a-1-13a-23-21}
L&:=&\sum_i\Big(\frac{\p}{\p x_i}+\frac{\p}{\p y_i}\Big)^2-4\sum_{i,j}X_iX_j\frac{\p^2}{\p x_i\p y_j}\nonumber\\
&\overset{\textrm{or}}=&\Delta_x+\Delta_y+2\sum_i\frac{\p^2}{\p x_i\p y_i}-4\sum_{i,j}X_iX_j\frac{\p^2}{\p x_i\p y_j}.
\end{eqnarray}
on $\Omega\times\Omega\subset \mathbb{R}^{2n}$.

In order to prove Theorem \ref{sharp-log-concavity-estimate} and \ref{spectral-gap-comparison-thm},
we next state two lemmas (see \cite{Lu-Seminar-May-26-2011}).
For the reader's convenience, we also give the corresponding proofs below.
\begin{lem}
\begin{description}
  \item[(i)]
The matrix $(X_{ij})_{n\times n}$ is semi-positive definite;
  \item[(ii)]
The coupling operator $L$ is degenerate elliptic;
  \item[(iii)]
Let $h$ be a smooth single variable function. Then
\begin{equation}\label{eq-13-12-7-18-31}
L\bbra{h(X)}=4h''(X).
\end{equation}
\end{description}
\end{lem}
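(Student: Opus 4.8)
The plan is to prove the three parts essentially by direct computation, treating (iii) as the real content and (i)--(ii) as warm-ups. For part (i), I would show that $(X_{ij})_{n\times n}$ is semi-positive definite by testing against an arbitrary vector $\xi=(\xi_1,\dots,\xi_n)\in\mathbb{R}^n$: since $X_{ij}=\frac{1}{X}(\delta_{ij}-X_iX_j)$, we get
\[
\sum_{i,j}X_{ij}\xi_i\xi_j=\frac{1}{X}\Bbra{\abs{\xi}^2-\Bbra{\sum_iX_i\xi_i}^2}\gs0,
\]
which is just the Cauchy--Schwarz inequality applied to $\xi$ and the unit vector $\tilde X=(X_1,\dots,X_n)$. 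Equality holds precisely when $\xi\parallel\tilde X$, so the matrix is genuinely only semi-definite, consistent with the statement.

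For part (ii), I would observe that the symbol of $L$ at a point $(x,y)$, paired with a covector split into its $x$-part $p=(p_1,\dots,p_n)$ and $y$-part $q=(q_1,\dots,q_n)$, is
\[
\sigma_L(p,q)=\sum_i(p_i+q_i)^2-4\sum_{i,j}X_iX_jp_iq_j
=\abs{p+q}^2-4\Bbra{\sum_iX_ip_i}\Bbra{\sum_jX_jq_j}.
\]
To see this is $\gs0$, I would use the elementary bound $4ab\ls(a+b)^2$ with $a=\sum_iX_ip_i$, $b=\sum_jX_jq_j$, giving $4ab\ls\bbra{\sum_iX_i(p_i+q_i)}^2\ls\abs{p+q}^2$ by Cauchy--Schwarz again (since $\abs{\tilde X}=1$). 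Hence $\sigma_L(p,q)\gs0$ for all $(p,q)$, so $L$ is degenerate elliptic; it is genuinely degenerate because the symbol vanishes on the codimension-one set where $p+q=0$ and the common value $\sum X_ip_i$ is unconstrained.

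For part (iii), which I expect to be the main obstacle, the strategy is to compute $L(h(X))$ term by term using the first- and second-derivative formulas for $X$ collected in this section. With $\frac{\p}{\p x_i}X=-X_i$ and $\frac{\p}{\p y_i}X=X_i$, the chain rule gives $\frac{\p}{\p x_i}h(X)=-h'(X)X_i$ and $\frac{\p}{\p y_i}h(X)=h'(X)X_i$, so in the combination $\bbra{\frac{\p}{\p x_i}+\frac{\p}{\p y_i}}h(X)$ the first-order terms already cancel; differentiating once more and summing over $i$, the only surviving contribution to $\sum_i\bbra{\frac{\p}{\p x_i}+\frac{\p}{\p y_i}}^2h(X)$ should come from $h''$ hitting the product of the $\pm X_i$ factors, yielding $4h''(X)\sum_i(X_i)^2=4h''(X)$ after using $\sum_i(X_i)^2=1$; the $h'$ terms will involve $X_{ij}$ combinations that cancel against each other because $\frac{\p^2X}{\p x_i\p y_i}=-X_{ij}$ has the opposite sign to $\frac{\p^2X}{\p x_i^2}$ and $\frac{\p^2X}{\p y_i^2}$. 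For the second group, $-4\sum_{i,j}X_iX_j\frac{\p^2}{\p x_i\p y_j}h(X)$: here $\frac{\p^2}{\p x_i\p y_j}h(X)=\frac{\p}{\p x_i}\bbra{h'(X)X_j}=-h''(X)X_iX_j-h'(X)X_{ij}$, so contracting with $X_iX_j$ and using $\sum_i(X_i)^2=1$ together with $\sum_iX_iX_{ij}=0$ from \eqref{eq-13a-1-5-10-51} gives exactly $-h''(X)$ per factor, and the whole group contributes $-4\bbra{-h''(X)}=4h''(X)$ — wait, I must recompute signs carefully: $-4\sum_{i,j}X_iX_j\bbra{-h''(X)X_iX_j-h'(X)X_{ij}}=4h''(X)\bbra{\sum_i(X_i)^2}^2+4h'(X)\sum_{i,j}X_iX_jX_{ij}=4h''(X)+0$. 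So the two groups together would give $8h''(X)$ unless the first group contributes a compensating $-4h''(X)$; the careful bookkeeping of which $h''$ and $h'$ terms appear in $\Delta_x h(X)+\Delta_y h(X)+2\sum_i\frac{\p^2}{\p x_i\p y_i}h(X)$ — in particular the fact that $\Delta_x h(X)=h''(X)\sum(X_i)^2+h'(X)\sum\p_{x_i}(-X_i)$ and the Laplacian-of-distance term $\sum_i\p_{x_i}(-X_i)=-\frac{n-1}{X}$ — is precisely where the computation must be done with full care, and I expect the $h'$-coefficients proportional to $\frac{n-1}{X}$ to cancel identically between the $\Delta_x+\Delta_y$ part, the mixed $2\sum\p_{x_i}\p_{y_i}$ part, and the $-4\sum X_iX_j\p_{x_i}\p_{y_j}$ part, leaving the clean identity $L(h(X))=4h''(X)$. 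The main obstacle is thus purely organizational: keeping the $h''$ versus $h'$ contributions straight across all four pieces of $L$ and invoking $\sum(X_i)^2=1$ and \eqref{eq-13a-1-5-10-51} at the right moments.
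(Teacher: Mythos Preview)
Your arguments for (i) and (ii) are correct and essentially match the paper's; for (ii) the paper instead rewrites the quadratic form as $\babs{(\xi+\eta)-\bseq{\xi+\eta,\tilde X}\tilde X}^2+\bseq{\xi-\eta,\tilde X}^2$, but your route via $4ab\ls(a+b)^2$ followed by Cauchy--Schwarz is equally valid and arguably cleaner.

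For (iii), your strategy is the paper's, but your bookkeeping goes astray at one specific point. You correctly note that $\bbra{\frac{\p}{\p x_i}+\frac{\p}{\p y_i}}h(X)=h'(X)(-X_i+X_i)=0$. But this vanishes \emph{identically} as a function of $(x,y)$, so applying $\bbra{\frac{\p}{\p x_i}+\frac{\p}{\p y_i}}$ once more gives $0$, not $4h''(X)$. Concretely, expanding $\p_{x_i}^2+\p_{y_i}^2+2\p_{x_i}\p_{y_i}$ on $h(X)$, the $h''$-coefficients are $(X_i)^2+(X_i)^2+2\bbra{-(X_i)^2}=0$ and the $h'$-coefficients are $X_{ii}+X_{ii}+2(-X_{ii})=0$; there is no leftover $\frac{n-1}{X}$ term to chase. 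Thus the first block of $L$ contributes nothing, and the entire $4h''(X)$ comes from the second block $-4\sum_{i,j}X_iX_j\p_{x_i}\p_{y_j}h(X)$, which you in fact computed correctly: it equals $4h''(X)\bbra{\sum_i X_i^2}^2+4h'(X)\sum_j X_j\sum_i X_iX_{ij}=4h''(X)$ by $\sum_i X_i^2=1$ and \eqref{eq-13a-1-5-10-51}. With this single correction your proof of (iii) is complete and coincides with the paper's.
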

\begin{proof}
(i)\,For any $\xi\in\mathbb{R}^n$, by Cauchy--Schwarz inequality, we have
\begin{eqnarray*}
\sum_{i,j}X_{ij}\xi_i\xi_j&=&\sum_{i,j}\frac{1}{X}(\delta_{ij}-X_iX_j)\xi_i\xi_j\\
&=&\frac{1}{X}\Big\{\sum_i\xi_i^2-\sum_iX_i\xi_i\cdot\sum_jX_j\xi_j\Big\}\\
&=&\frac{1}{X}\Big\{\sum_i\xi_i^2-\bbra{\sum_iX_i\xi_i}^2\Big\}\\
&\gs&\frac{1}{X}\Big\{\sum_i\xi_i^2-\sum_iX_i^2\cdot\sum_i\xi_i^2\Big\}=0.
\end{eqnarray*}
Therefore, the conclusion is true.

(ii)\,We only need to show that the corresponding quadratic form of $L$ is nonnegative.
That is, the following inequality holds for any $\xi$, $\eta\in\mathbb{R}^n$.
\[
\sum_i(\xi_i+\eta_i)^2-4\sum_{i,j}X_iX_j\xi_i\eta_j\gs0,
\]
which implies the conclusion. Let $\seq{\cdot}$ denote the inner product on $\mathbb{R}^n$.
In fact, using $\bseq{\tilde{X},\tilde{X}}=|\tilde{X}|^2=1$, we have
\begin{eqnarray*}
&&\sum_i(\xi_i+\eta_i)^2-4\sum_{i,j}X_iX_j\xi_i\eta_j\\
&=&\abs{\xi+\eta}^2-4\bseq{\xi,\tilde{X}}\bseq{\eta,\tilde{X}}\\
&=&\abs{\xi+\eta}^2-\big(\bseq{\xi+\eta,\tilde{X}}^2-\bseq{\xi-\eta,\tilde{X}}^2\big)\\
&=&\abs{\xi+\eta}^2-2\bseq{\xi+\eta,\tilde{X}}^2
+\bseq{\xi+\eta,\tilde{X}}^2\bseq{\tilde{X},\tilde{X}}+\bseq{\xi-\eta,\tilde{X}}^2\\
&=&\babs{(\xi+\eta)-\bseq{\xi+\eta,\tilde{X}}\tilde{X}}^2+\bseq{\xi-\eta,\tilde{X}}^2\gs0,
\end{eqnarray*}
and with equality if and only if
\[
\left\{
\begin{array}{ll}
\xi+\eta=\bseq{\xi+\eta,\tilde{X}}\tilde{X},\\[5pt]
\bseq{\xi-\eta,\tilde{X}}=0,
\end{array}
\right.
\]
which is equivalent to $\abs{\xi}=\abs{\eta}$.

(iii)\,By directly calculating, it is easy to verify that
\[
\frac{\p^2h(X)}{\p x_i^2}=h''(X)(X_i)^2+h'(X)X_{ii},\qquad
\frac{\p^2h(X)}{\p y_i^2}=h''(X)(X_i)^2+h'(X)X_{ii},
\]
\[
\frac{\p^2h(X)}{\p x_i\p y_i}=-h''(X)(X_i)^2-h'(X)X_{ii},\qquad
\frac{\p^2h(X)}{\p x_i\p y_j}=-h''(X)X_iX_j-h'(X)X_{ij}.
\]

Applying the above identities and \eqref{eq-13a-1-5-10-51}, we obtain
\begin{eqnarray*}
L\bbra{h(X)}&=&h''(X)\Big\{\sum_i\bsqu{(X_i)^2+(X_i)^2-2(X_i)^2}+4\sum_{i,j}X_iX_jX_iX_j\Big\}\\
&&+h'(X)\Big[\sum_i\bbra{X_{ii}+X_{ii}-2X_{ii}}+4\sum_{i,j}X_iX_jX_{ij}\Big]\\
&=&4h''(X)\sum_{i,j}X_iX_jX_iX_j+4h'(X)\sum_{i,j}X_iX_jX_{ij}\\
&=&4h''(X)\Bsqu{\sum_i(X_i)^2}^2+4h'(X)\sum_iX_i\Bbra{\sum_jX_jX_{ij}}\\
&=&4h''(X).
\end{eqnarray*}
Thus we get the desired equality \eqref{eq-13-12-7-18-31}. So far the lemma is proved. 
\end{proof}

For simplicity, let us simply adopt the following notations:
\[
v_i(x)=\frac{\p v(x)}{\p x_i},\qquad v_{ij}(x)=\frac{\p^2v(x)}{\p x_i\p x_j},
\qquad v_{ijk}(x)=\frac{\p^3v(x)}{\p x_i\p x_j\p x_k},
\]
and so on.

\begin{lem}\label{lem-13-11-28-8-3}
For any smooth function $v$, we have
\begin{equation}\label{eq-13-11-27-11-9}
L\bset{\bsqu{\nabla v(y)-\nabla v(x)}\cdot\tilde{X}}
=\bsqu{\nabla\Delta v(y)-\nabla\Delta v(x)}\cdot\tilde{X},
\end{equation}
or, equivalently,
\[
L\Bset{\sum_i\bsqu{v_i(y)-v_i(x)}X_i}=\sum_{i,j}\bsqu{v_{ijj}(y)-v_{ijj}(x)}X_i.
\]
\end{lem}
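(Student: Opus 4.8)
The plan is to compute $L$ applied to the scalar function $w(x,y):=\bsqu{\nabla v(y)-\nabla v(x)}\cdot\tilde{X}=\sum_i\bsqu{v_i(y)-v_i(x)}X_i$ directly, exploiting the fact that the coefficients $X_i$, $X_{ij}$, $X_{ijk}$ obey the identities collected in Section 2, and that $v_i(y)-v_i(x)$ depends on $x$ only through $x$ and on $y$ only through $y$ (so mixed second derivatives of this factor vanish). First I would record, for the product $f(x,y)X_i$ with $f$ a general smooth function, the action of each of the three pieces of $L$: the pure Laplacians $\Delta_x+\Delta_y$, the mixed term $2\sum_i\p^2/\p x_i\p y_i$, and the term $-4\sum_{i,j}X_iX_j\,\p^2/\p x_i\p y_j$. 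Since $X$ and its derivatives depend on $x$ and $y$ only through $y-x$, we have $\p X_i/\p x_j=-X_{ij}$, $\p X_i/\p y_j=X_{ij}$, etc., which makes the bookkeeping symmetric.

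The key step is to split the computation by how many derivatives land on $f:=v_i(y)-v_i(x)$ versus on $X_i$. Because $f$ is a difference of a function of $x$ alone and a function of $y$ alone, $\p^2 f/\p x_j\p y_k=0$; this kills a whole family of cross terms and is what makes the identity clean. The terms where both derivatives hit $X_i$ reproduce, after summation, exactly $L(X_i)$-type expressions, which vanish or simplify by \eqref{eq-13a-1-5-10-51} and $\sum_i(X_i)^2=1$ — indeed one checks $\sum_i(\text{something})X_{ijk}$-type sums collapse just as in the proof of part (iii) of the preceding lemma. The terms where one derivative hits $f$ and one hits $X_i$ need to be paired up between $\Delta_x+\Delta_y$, the $2\p_x\p_y$ term, and the $-4X_iX_j\p_x\p_y$ term; I expect the $\pm$ signs coming from $\p X_i/\p x_j=-\p X_i/\p y_j$ to cause most of these to cancel, leaving only a contribution proportional to $\sum_i X_i X_{ij}=0$. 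Finally, the terms where both derivatives fall on $f$: $\Delta_x$ contributes $-\sum_{i,j}v_{ijj}(x)X_i$, $\Delta_y$ contributes $\sum_{i,j}v_{ijj}(y)X_i$, and the mixed terms contribute nothing since $\p^2 f/\p x\p y=0$. Summing gives $\sum_{i,j}\bsqu{v_{ijj}(y)-v_{ijj}(x)}X_i=\bsqu{\nabla\Delta v(y)-\nabla\Delta v(x)}\cdot\tilde X$, as claimed.

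An alternative, slightly slicker route I might take instead: apply $L$ in the two blocks $\Delta_x+\Delta_y$ and the "correction" $R:=2\sum_i\p^2/\p x_i\p y_i-4\sum_{i,j}X_iX_j\p^2/\p x_i\p y_j$ separately. For the first block, $(\Delta_x+\Delta_y)\bbra{f X_i}=(\Delta_x+\Delta_y f)X_i + 2\nabla f\cdot\nabla X_i + f(\Delta_x+\Delta_y)X_i$ summed over $i$, where $\Delta_x+\Delta_y$ annihilates $X_i$ (a homogeneous-degree-zero function of $y-x$ has $\Delta_x X_i=\Delta_y X_i$ and in fact $\sum_j X_{ijj}=0$ can be read off from \eqref{eq-13a-1-5-10-39}), and the cross term $2\sum_i\nabla f\cdot\nabla X_i$ is handled by \eqref{eq-13a-1-5-10-51}. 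For the correction block one uses that $R$ annihilates any function of $X=|y-x|$ alone and that $R(fX_i)$ only sees the $x$-$y$-mixed structure, where again $\p^2 f/\p x\p y=0$ does the work.

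The main obstacle will be purely organizational rather than conceptual: keeping the sign conventions straight across the three operator pieces and correctly tracking which of the many index sums genuinely vanish versus which combine. The two facts that do all the real work are $\p^2\bsqu{v_i(y)-v_i(x)}/\p x_j\p y_k=0$ and the contraction identities $\sum_i(X_i)^2=1$, $\sum_i X_i X_{ij}=0$ (together with $\sum_j X_{ijj}=0$, a direct consequence of \eqref{eq-13a-1-5-10-39}); once these are in hand the identity \eqref{eq-13-11-27-11-9} falls out. I would present the computation in the compact index form $L\bset{\sum_i\bsqu{v_i(y)-v_i(x)}X_i}=\sum_{i,j}\bsqu{v_{ijj}(y)-v_{ijj}(x)}X_i$ and note that this is just $\bsqu{\nabla\Delta v(y)-\nabla\Delta v(x)}\cdot\tilde X$, since $\Delta v=\sum_j v_{jj}$ and hence $(\nabla\Delta v)_i=\sum_j v_{ijj}$.
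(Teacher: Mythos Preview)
Your main route is correct and will yield \eqref{eq-13-11-27-11-9}, but it is organized differently from the paper and ends up doing more work. The paper does not split $L$ as $\Delta_x+\Delta_y+2\sum_j\p_{x_j}\p_{y_j}-4\sum_{j,k}X_jX_k\p_{x_j}\p_{y_k}$ and then Leibniz each piece; instead it uses the grouping in \eqref{def-13a-1-13a-23-21}, namely $L=\sum_j(\p_{x_j}+\p_{y_j})^2-4\sum_{j,k}X_jX_k\p_{x_j}\p_{y_k}$. The point of this grouping is that $(\p_{x_j}+\p_{y_j})X_i=-X_{ij}+X_{ij}=0$, so $X_i$ behaves like a constant under the entire first block: two applications of $(\p_{x_j}+\p_{y_j})$ to $\sum_i[v_i(y)-v_i(x)]X_i$ give $\sum_{i}[v_{ijj}(y)-v_{ijj}(x)]X_i$ with no cross terms at all. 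The second block is then shown to annihilate $W$ via the contraction identities, exactly the computation you sketch for your fourth piece. Your decomposition forces you to generate and then recombine the cross terms that the paper's grouping never produces; both arrive at the same place, but the paper's is shorter.

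Your ``alternative, slightly slicker route'' contains genuine errors and would not go through as stated. From \eqref{eq-13a-1-5-10-39} one computes
\[
\sum_j X_{ijj}=-\frac{1}{X}\Bbra{2\sum_j X_jX_{ij}+X_i\sum_j X_{jj}}=-\frac{n-1}{X^2}\,X_i,
\]
which is not zero for $n\gs2$; consequently $(\Delta_x+\Delta_y)X_i=2\sum_j X_{ijj}\neq 0$, contrary to your claim. Likewise, $R:=2\sum_j\p_{x_j}\p_{y_j}-4\sum_{j,k}X_jX_k\p_{x_j}\p_{y_k}$ does \emph{not} annihilate functions of $X$ alone: a short computation (or subtracting $(\Delta_x+\Delta_y)h(X)=2h''+\tfrac{2(n-1)}{X}h'$ from $L(h(X))=4h''$) gives $R\bbra{h(X)}=2h''-\tfrac{2(n-1)}{X}h'$. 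These nonzero contributions do cancel each other in the full $L$, but your alternative route asserts they vanish separately, which is false. If you want the slick version, use the paper's grouping $\sum_j(\p_{x_j}+\p_{y_j})^2$ rather than $\Delta_x+\Delta_y$.
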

\begin{proof}
Denote 
\[
W:=\bsqu{\nabla v(y)-\nabla v(x)}\cdot\tilde{X}
\overset{\textrm{or}}=\sum_i\bsqu{v_i(y)-v_i(x)}X_i.
\]
We first compute as follows:
\begin{eqnarray*}
&&\Big(\frac{\p}{\p x_j}+\frac{\p}{\p y_j}\Big)^2\Big\{\bsqu{v_i(y)-v_i(x)}X_i\Big\}\\
&=&\Big(\frac{\p}{\p x_j}+\frac{\p}{\p y_j}\Big)\Big\{\bsqu{v_{ij}(y)-v_{ij}(x)}X_i+\bsqu{v_i(y)-v_i(x)}(-X_{ij}+X_{ij})\Big\}\\
&=&\Big(\frac{\p}{\p x_j}+\frac{\p}{\p y_j}\Big)\Big\{\bsqu{v_{ij}(y)-v_{ij}(x)}X_i\Big\}\\
&=&\bsqu{v_{ijj}(y)-v_{ijj}(x)}X_i+\bsqu{v_{ij}(y)-v_{ij}(x)}(-X_{ij}+X_{ij})\\
&=&\bsqu{v_{ijj}(y)-v_{ijj}(x)}X_i.
\end{eqnarray*}
Thus,
\begin{equation}\label{eq-13a-1-6-11-38}
\sum_j\Big(\frac{\p}{\p x_j}+\frac{\p}{\p y_j}\Big)^2W=\sum_{i,j}\bsqu{v_{ijj}(y)-v_{ijj}(x)}X_i.
\end{equation}

On the other hand,
\[
\sum_{j,k}X_jX_k\frac{\p^2W}{\p x_j\p y_k}
=-\sum_{i,j,k}X_jX_k\Big\{v_{ik}X_{ij}+v_{ij}X_{ik}+\bsqu{v_i(y)-v_i(x)}X_{ijk}\Big\}.
\]
Using \eqref{eq-13a-1-5-10-51}, we get
\[
\sum_{i,j,k}X_jX_kv_{ki}(y)X_{ij}=\sum_{i,k}X_kv_{ki}(y)\sum_jX_jX_{ij}=0.
\]
Similarly,
\[
\sum_{i,j,k}X_jX_kv_{ij}(x)X_{ik}=0.
\]

In addition, from \eqref{eq-13a-1-5-10-39} and \eqref{eq-13a-1-5-10-51}, it follows that
\begin{eqnarray*}
&&\sum_{i,j,k}X_jX_k\bsqu{v_i(y)-v_i(x)}X_{ijk}\\
&=&-\frac{1}{X}\sum_{i,j,k}X_jX_k\bsqu{v_i(y)-v_i(x)}(X_{ik}X_j+X_iX_{jk}+X_{ij}X_k)\\
&=&\sum_i\bsqu{v_i(y)-v_i(x)}\Big\{\bbra{\sum_kX_kX_{ik}}\cdot\sum_j(X_j)^2\\
&&\qquad+\sum_jX_i X_j\bbra{\sum_kX_kX_{jk}}+\bbra{\sum_jX_jX_{ij}}\cdot\sum_k(X_k)^2\Big\}\\
&=&0.
\end{eqnarray*}
Thus,
\begin{equation}\label{eq-13a-1-6-11-53}
\sum_{i,j}X_iX_j\frac{\p^2W}{\p x_i\p y_j}=0.
\end{equation}
So we derive the desired identity \eqref{eq-13-11-27-11-9} from \eqref{eq-13a-1-6-11-38}
and \eqref{eq-13a-1-6-11-53}. This completes the proof.
\end{proof}

\section{On sharp log-concavity estimate of ground state}
Before proving Theorem \ref{sharp-log-concavity-estimate} we precede it with a well-known
result, sometimes called approximation lemma (see \cite[Lemma 2.1]{Lee-Vazquez}).
For our purposes, it is technically simpler that an alternative approach to
Theorem \ref{sharp-log-concavity-estimate} based on this lemma.

\begin{lem}\label{approximation-lemma}
$($\textrm{Approximation lemma}$)$. Let $u=u(x,t)$ be the solution of the problem
\begin{equation}\label{heat-flow-13-12-3-21-21}
\left\{
  \begin{array}{lll}
u_t=\Delta u-Vu\qquad\hbox{in}\quad\Omega\times(0,T],\\
u(x,0)=u_0(x),\\
u=0\qquad\hbox{on}\quad\p\Omega\times[0,T].
  \end{array}
\right.
\end{equation}
For every $u_0\in L^2(\Omega)$ we have
\begin{equation}\label{Approximation-1}
\abs{e^{\lambda_1t}u(x,t)-a_1\phi_1(x)}\ls Ce^{-(\lambda_2-\lambda_1)t},
\end{equation}
and
\begin{equation}\label{Approximation-2}
\norm{e^{\lambda_1t}u(x,t)-a_1\phi_1(x)}_{C_x^k(\Omega)}
\ls Ce^{-(\lambda_2-\lambda_1)t}\qquad\hbox{for all}\quad k=1,2,\cdots,
\end{equation}
where $a_1=\int_\Omega u_0(x)\phi_1(x)$, and the constant $C$ does not depend on $T$.
\end{lem}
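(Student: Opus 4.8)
The plan is to use the spectral decomposition of $u_0$ in the orthonormal basis $\{\phi_j\}_{j\geq 1}$ of $L^2(\Omega)$ furnished by the Dirichlet eigenfunctions of the Schr\"odinger operator $-\Delta + V$. Writing $u_0 = \sum_{j\geq 1} a_j \phi_j$ with $a_j = \int_\Omega u_0 \phi_j$, the solution of the linear parabolic problem \eqref{heat-flow-13-12-3-21-21} is given explicitly by $u(x,t) = \sum_{j\geq 1} a_j e^{-\lambda_j t}\phi_j(x)$, since each term $e^{-\lambda_j t}\phi_j$ solves the heat equation $u_t = \Delta u - Vu$ with the correct boundary condition and $u(\cdot,0)=u_0$ in $L^2$. (This representation is standard parabolic theory: the semigroup $e^{-t(-\Delta+V)}$ is well defined on $L^2(\Omega)$ with Dirichlet boundary data, and is smoothing for $t>0$.) Multiplying through by $e^{\lambda_1 t}$ isolates the leading term:
\[
e^{\lambda_1 t} u(x,t) - a_1 \phi_1(x) = \sum_{j\geq 2} a_j e^{-(\lambda_j - \lambda_1)t}\phi_j(x).
\]

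First I would establish \eqref{Approximation-1}, the $L^2$-in-$x$ (or pointwise after smoothing) bound. Taking $L^2(\Omega)$ norms in $x$ and using orthonormality of $\{\phi_j\}$,
\[
\bnorm{e^{\lambda_1 t} u(\cdot,t) - a_1 \phi_1}_{L^2(\Omega)}^2 = \sum_{j\geq 2} a_j^2 e^{-2(\lambda_j-\lambda_1)t} \ls e^{-2(\lambda_2-\lambda_1)t}\sum_{j\geq 2} a_j^2 \ls e^{-2(\lambda_2-\lambda_1)t}\norm{u_0}_{L^2}^2,
\]
since $\lambda_j - \lambda_1 \gs \lambda_2 - \lambda_1$ for all $j\geq 2$. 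This gives the claimed decay with $C$ depending only on $\norm{u_0}_{L^2}$, hence independent of $T$. To upgrade an $L^2$ bound to the pointwise statement as literally written in \eqref{Approximation-1}, I would split $t = t/2 + t/2$: first let the semigroup act for time $t/2$ to land in a space of smooth functions with quantitative bounds (using that $e^{-s(-\Delta+V)}$ maps $L^2$ into $C^\infty(\bar\Omega)$ for $s>0$ with norms controlled by negative powers of $s$), then run the remaining time $t/2$ and repeat the eigenfunction estimate, absorbing the fixed factor from the first half-step into $C$.

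For \eqref{Approximation-2}, the argument is the same in spirit but requires controlling $C^k_x$ norms of the tail $\sum_{j\geq 2} a_j e^{-(\lambda_j-\lambda_1)t}\phi_j$. The mechanism is elliptic regularity combined with Weyl-type growth of the eigenvalues: $\lambda_j \sim c_n j^{2/n}$, and bootstrapping the eigenvalue equation $-\Delta\phi_j = (\lambda_j - V)\phi_j$ gives polynomial-in-$\lambda_j$ bounds for $\norm{\phi_j}_{C^k(\bar\Omega)}$, while $|a_j| \ls \norm{u_0}_{L^2}$. Then for $t\gs t_0>0$ the exponential factor $e^{-(\lambda_j-\lambda_1)t}$ beats any polynomial in $\lambda_j$, so the series converges in $C^k_x$ and is bounded by $Ce^{-(\lambda_2-\lambda_1)t}$ with $C$ depending on $k$, $\Omega$, $V$, $\norm{u_0}_{L^2}$ but not on $T$. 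The main obstacle is purely technical rather than conceptual: making the convergence of the eigenfunction expansion in $C^k$ rigorous (as opposed to $L^2$), which is where one needs the Weyl asymptotics and the elliptic estimates for $\norm{\phi_j}_{C^k}$; everything else is bookkeeping with the geometric factor $e^{-(\lambda_2-\lambda_1)t}$. Since this is a known result cited from \cite{Lee-Vazquez}, I would state these standard parabolic/elliptic facts and refer to the literature rather than reprove them.
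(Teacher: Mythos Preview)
Your proposal is correct and shares the starting point with the paper: expand $u_0=\sum_{j\ge1}a_j\phi_j$, write $u(x,t)=\sum_{j\ge1}a_je^{-\lambda_jt}\phi_j(x)$, and isolate the leading mode. The difference lies in how the $C^k_x$ estimate for the tail is obtained. You propose term-by-term control via Weyl asymptotics $\lambda_j\sim c_nj^{2/n}$ together with elliptic bounds $\norm{\phi_j}_{C^k}\ls P(\lambda_j)$, so that the exponential decay $e^{-(\lambda_j-\lambda_1)t}$ dominates the polynomial growth. The paper instead packages the entire tail as a single function: setting
\[
\eta(x,t):=\sum_{j\ge2}a_je^{-(\lambda_j-\lambda_2)t}\phi_j(x),
\]
one has $e^{\lambda_1t}u-a_1\phi_1=e^{-(\lambda_2-\lambda_1)t}\eta$, the function $\eta$ is uniformly bounded in $L^2_x$ by $\norm{u_0}_{L^2}$, and $\eta$ solves the linear parabolic problem $\eta_t=\Delta\eta-V\eta+\lambda_2\eta$ with zero Dirichlet data. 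Standard parabolic smoothing then gives $\norm{\eta(\cdot,t)}_{C^k_x}\ls C(\delta_0)$ for all $t\ge\delta_0$ in one stroke, with no appeal to Weyl asymptotics or eigenfunction growth bounds. Your route works but imports more machinery; the paper's route is shorter because it reduces everything to a single application of parabolic regularity for a function with bounded $L^2$ norm.
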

Use the same argument as that in \cite{Lee-Vazquez},
we can prove the desired conclusion as follows.
\begin{proof}

It is obvious that $e^{-\lambda_jt}\phi_j(x)$ is the solution of the equation $u_t=\Delta u-Vu$
with initial data $\phi_j(x)$. On the other hand, for $u_0\in L^2(\Omega)$ there are coefficients
$\{a_n\}$ such that $u_0(x)=\sum_{j=1}^\infty a_j\phi_j(x)$. Hence, the solution $u(x,t)$ of the problem
\eqref{heat-flow-13-12-3-21-21} can be written as follows
\[
u(x,t)=\sum_{j=1}^\infty a_je^{-\lambda_jt}\phi_j(x)
=a_1e^{-\lambda_1t}\phi_1(x)+e^{-\lambda_2t}\eta(x,t),
\]
where $\eta(x,t)=\sum_{j=2}^\infty a_je^{-(\lambda_j-\lambda_2)t}\phi_j(x)$, and
\[
\norm{\eta}_{L_x^2(\Omega)}^2\ls\sum_{j=2}^\infty a_j^2\norm{\phi_j}_{L^2(\Omega)}^2
=\sum_{j=2}^\infty a_j^2\ls\norm{u_0}_{L^2(\Omega)}^2.
\]
Thus, we get
\begin{equation}\label{eqn-14-6-20-9-22}
e^{\lambda_1t}u(x,t)-a_1\phi_1(x)=e^{-(\lambda_2-\lambda_1)t}\eta(x,t).
\end{equation}

It is easy to check that $\eta(x,t)$ should satisfy the equation
$\eta_t=\Delta\eta-V\eta+\lambda_2\eta$ in $\Omega\times(0,T]$ and the conditions:
\[
\left\{
  \begin{array}{ll}
\eta(x,0)=u_0-a_1\phi_1(x),\\[3pt]
\eta=0\quad\hbox{on}\quad\p\Omega\times[0,T].
  \end{array}
\right.
\]
So, the $L^2$-boundedness of $\eta$ and standard parabolic estimates
(see \cite[Theorem 5.14, 4.28 and 7.17]{Lieberman-1996};
or \cite[Theorem 3.5, 3.4 and 3.11]{Bei-Hu-2011};
see also \cite[Theorem 4.2]{Ya-Zhe-Chen-2003}, etc.) tell us that
\[
\norm{\eta}_{L^\infty},\,\,\norm{\eta}_{C_x^k(\Omega)}\ls C(\delta_0)
\qquad\hbox{for all}\quad t\gs\delta_0\quad\hbox{and}\quad k=1,2,\cdots,
\]
from which and \eqref{eqn-14-6-20-9-22}, we can deduce the conclusions of Lemma.
Since $\eta=0$ on $\p\Omega\times[0,T]$, and the above equation is linearly homogeneous,
with $V$ independent of the variable $t$, hence the constant $C(\delta_0)$
appeared in the previous estimates does not depend on $T$.
\end{proof}
Now let us give an easy proof of Theorem \ref{sharp-log-concavity-estimate}
via a `double coordinate' approach (cf. \cite{Clutterbuck-2004-phD-thesis,Andrews-Clutterbuck-2011-JAMS})
and the above approximation lemma. We are partly consulting
\cite{Andrews-Clutterbuck-2011-JAMS,Lu-Seminar-May-26-2011} in this process of proof.
Note that there is no essentially different between \cite{Andrews-Clutterbuck-2011-JAMS} and here,
as we will prove Theorem \ref{sharp-log-concavity-estimate} specially, the coupling operator $L$ is
exactly the same as the one used in the original proof (see \cite{Andrews-Clutterbuck-2011-JAMS}).
Because some of its details, such as Theorem \ref{spectral-gap-comparison-thm}, are used below.
So a detailed account of each of these steps follows.

\underline{Proof of Theorem \ref{sharp-log-concavity-estimate}}.
\begin{proof}
We first choose a function $u_0(x)\in W_0^{1,2}(\Omega)$,
such that $u_0(x)>0$ in $\Omega$, and
\begin{equation}\label{Phi-log-concavity-u0}
\bsqu{\nabla\log u_0(y)-\nabla\log u_0(x)}\cdot\tilde{X}\ls\Phi(X),
\qquad\hbox{whenever}\quad x\neq y\quad\hbox{in}\quad\Omega,
\end{equation}
while
\begin{equation}\label{defn-Phi-func}
\Phi(s):=2(\log\tilde{\phi}_1)'(s/2)\overset{or}=2(\tilde{\phi}_1'/\tilde{\phi}_1)(s/2)
\qquad\hbox{for all}\quad s\in[0,d].
\end{equation}
Because some tedious manipulation is need to construct $u_0(x)$, we hence leave this highly technical
task in Appendix B (see \eqref{initial-data-function} below).

We next consider the problem \eqref{heat-flow-13-12-3-21-21}, and then letting $T\rightarrow+\infty$.
Let $u$ denote the solution of the problem \eqref{heat-flow-13-12-3-21-21}.
It is easy to verify that $v:=\log u$ satisfies the equation:
\begin{equation}\label{eqn-v-14-6-22-23-11}
v_t=\Delta v+\abs{\nabla v}^2-V.
\end{equation}

Define an evolving quantity
\begin{eqnarray*}
Z_\ve(x,y,t)&:=&\bsqu{\nabla v(y,t)-\nabla v(x,t)}\cdot\tilde{X}-\Phi(X)-\ve e^{Ct}\\
&\overset{or}=&\sum_i\bsqu{v_i(y,t_0)-v_i(x,t_0)}X_i-\Phi(X)-\ve e^{Ct},
\end{eqnarray*}
for small $\ve>0$ and some suitably large $C$ to be chosen (independent of $\ve$).

We first show that $Z_\ve(x,y,0)<0$ for all $x,\,y\in\Omega$ with $x\neq y$.
This is achieved by the above choice of $u_0(x)$.

We next claim that $Z_\ve(x,y,t)<0$ for all $x,\,y\in\Omega$ with $x\neq y$ and all $t>0$.
We argue by contradiction and assume that $Z_\ve$ at $(x_0,y_0,t_0)$, vanishes for the first time.
Clearly, $t_0>0$, $x_0\neq y_0\in\Omega$.

For simplicity, we drop the subscripts in $x_0$ and $y_0$ if there
is no confusion and still denote them by $x$ and $y$ respectively.

Since $Z_\ve(x,y,t_0)=0$,
then
\[
\sum_i\bsqu{v_i(y,t_0)-v_i(x,t_0)}X_i=\Phi(X)+\ve e^{Ct_0}.
\]
Moreover, the maximum principle concludes that
\[
D_tZ_\ve\gs0,\qquad\nabla_xZ_\ve=\nabla_yZ_\ve=0,\qquad LZ_\ve\ls0\qquad\hbox{at}\quad(x,y,t_0).
\]
Clearly, $\nabla_xZ_\ve(x,y,t_0)=\nabla_yZ_\ve(x,y,t_0)=0$, implies that
\begin{equation*}\label{e-13-11-28-8-22}
\left\{
\begin{array}{ll}
\displaystyle-\sum_iv_{ij}(x,t_0)X_i-\bsqu{v_i(y,t_0)-v_i(x,t_0)}X_{ij}+\Phi'(X)X_j=0,\\[10pt]
\displaystyle\sum_iv_{ij}(y,t_0)X_i+\bsqu{v_i(y,t_0)-v_i(x,t_0)}X_{ij}-\Phi'(X)X_j=0,
\end{array}
\right.
\end{equation*}
for all $i,j=1,\cdots,n$. Multiplying the first equality and second equality
by $v_j(x,t_0)$ and $v_j(y,t_0)$ respectively, summing over $j$ successively,
and also take into account that the matrix $(X_{ij})_{n\times n}$ is semi-positive definite, we have
\begin{eqnarray}\label{ineq-13-11-28-10-26}
&&\sum_{i,j}\bsqu{v_j(y,t_0)v_{ij}(y,t_0)-v_j(x,t_0)v_{ij}(x,t_0)}X_i\nonumber\\
&=&-\sum_{i,j}\bsqu{v_j(y,t_0)-v_j(x,t_0)}\cdot\bsqu{v_i(y,t_0)-v_i(x,t_0)}X_{ij}
+\Phi(X)\Phi'(X)+\ve\Phi'(X)e^{Ct_0}\nonumber\\
&\ls&\Phi(X)\Phi'(X)+\ve\Phi'(X)e^{Ct_0}.
\end{eqnarray}

From \eqref{eq-13-12-7-18-31} and \eqref{eq-13-11-27-11-9}, it is very easy to verify that
\[
LZ_\ve(x,y,t_0)=\bsqu{\nabla\Delta v(y,t_0)-\nabla\Delta v(x,t_0)}\cdot\tilde{X}-4\Phi''(X).
\]
Thus
\begin{equation}\label{eq-13-11-28-10-21}
\bsqu{\nabla\Delta v(y,t_0)-\nabla\Delta v(x,t_0)}\cdot\tilde{X}=LZ_\ve(x,y,t_0)+4\Phi''(X).
\end{equation}

Using Eq. \eqref{eqn-v-14-6-22-23-11} satisfied by $v$, we proceed by dealing with $Z_\ve$ as follows
\begin{eqnarray*}
D_tZ_\ve(x,y,t_0)&=&\bsqu{\nabla v_t(y,t_0)-\nabla v_t(x,t_0)}\cdot\tilde{X}-C\ve e^{Ct_0}\\
&=&\bsqu{\nabla\Delta v(y,t_0)-\nabla\Delta v(x,t_0)}\cdot\tilde{X}\\
&&+\big[\nabla\bbra{\abs{\nabla v(y,t_0)}^2}-\nabla\bbra{\abs{\nabla v(x,t_0)}^2}\big]\cdot\tilde{X}\\
&&-\bsqu{\nabla V(y)-\nabla V(x)}\cdot\tilde{X}-C\ve e^{Ct_0}\\
&=&\bsqu{\nabla\Delta v(y,t_0)-\nabla\Delta v(x,t_0)}\cdot\tilde{X}\\
&&+2\sum_{i,j}\bsqu{v_j(y,t_0)v_{ji}(y,t_0)-v_j(x,t_0)v_{ji}(x,t_0)}X_i\\
&&-\bsqu{\nabla V(y)-\nabla V(x)}\cdot\tilde{X}-C\ve e^{Ct_0}.
\end{eqnarray*}
Combining \eqref{ineq-13-11-28-10-26} with \eqref{eq-13-11-28-10-21} we conclude that at $(x,y,t_0)$,
\begin{eqnarray*}
D_tZ_\ve&\ls&LZ_\ve+4\Phi''(X)+2\Phi(X)\Phi'(X)+2\ve\Phi'(X)e^{Ct_0}\\
&&-\bsqu{\nabla V(y)-\nabla V(x)}\cdot\tilde{X}-C\ve e^{Ct_0}.
\end{eqnarray*}
By a simple calculation, we have
\begin{eqnarray}\label{eqn-14-5-23a-13-22}
\Phi'(s)&=&-\frac{\tilde{\phi}_1''(\frac{s}{2})\tilde{\phi}_1(\frac{s}{2})-[\tilde{\phi}_1'(\frac{s}{2})]^2}
{[\tilde{\phi}_1(\frac{s}{2})]^2}
=\frac{\tilde{\phi}_1''(\frac{s}{2})}{\tilde{\phi}_1(\frac{s}{2})}
-\Big[\frac{\tilde{\phi}_1'(\frac{s}{2})}{\tilde{\phi}_1(\frac{s}{2})}\Big]^2\nonumber\\
&=&\frac{\tilde{V}(\frac{s}{2})\tilde{\phi}_1(\frac{s}{2})-\tilde{\lambda}_1\tilde{\phi}_1(\frac{s}{2})}
{\tilde{\phi}_1(\frac{s}{2})}-\frac{1}{4}[\Phi(s)]^2
=\tilde{V}(\frac{s}{2})-\tilde{\lambda}_1-\frac{1}{4}[\Phi(s)]^2,
\end{eqnarray}
and
\[
\Phi''(s)=\frac{1}{2}\tilde{V}'(\frac{s}{2})-\frac{1}{2}\Phi(s)\Phi'(s).
\]
From this we easily get that
\[
4\Phi''(X)+2\Phi(X)\Phi'(X)=2\tilde{V}'\big(\frac{X}{2}\big).
\]
In addition, \eqref{Modulus-of-convexity} implies that
\[
-\bsqu{\nabla V(y)-\nabla V(x)}\cdot\tilde{X}
\ls-2\tilde{V}'\big(\frac{\abs{y-x}}{2}\big)\overset{or}=-2\tilde{V}'\big(\frac{X}{2}\big).
\]
Since $L$ is a degenerate elliptic operator, then $LZ_\ve(x,y,t_0)\ls0$.
Thus we conclude that
\[
0\ls D_tZ_\ve\ls LZ_\ve+\big[2\Phi'(X)-C\big]\ve e^{t_0}\ls\big[2\Phi'(X)-C\big]\ve e^{t_0}<0,\qquad\hbox{at}\quad(x,y,t_0).
\]
The last strict inequality is produced by choosing
\[
C:=2\|\tilde{V}\|_{C([0,\frac{d}{2}])}\gs2\tilde{V}\big(\frac{s}{2}\big)
>2\Big[\tilde{V}\big(\frac{s}{2}\big)-\tilde{\lambda}_1
-\big(\frac{\tilde{\phi}'_1}{\tilde{\phi}_1}\big)^2\big(\frac{s}{2}\big)\Big]
=2\Phi'(s).
\]
Clearly, $C$ is independent of $\ve$ as required. This really leads
to a contradiction in assuming that $Z_\ve$ does not remain negative.
So, $Z_\ve(x,y,t)<0$ always holds for all
$(x,y,t)\in\Omega\times\Omega\times\mathbb{R}^+$.
Therefore, sending $\ve\rightarrow0$, we deduce that
\[
Z(x,y,t):=\bsqu{\nabla v(y,t)-\nabla v(x,t)}\cdot\tilde{X}-\Phi(X)
=\lim_{\ve\rightarrow0}Z_\ve(x,y,t)\ls0.
\]

Since $u(x,t)$ is a solution of  \eqref{heat-flow-13-12-3-21-21},
then Lemma \ref{approximation-lemma} yields that
\[
\lim_{t\rightarrow\infty}e^{\lambda_1t}u(x,t)=a_1\phi_1(x)\qquad\hbox{and}\qquad
\lim_{t\rightarrow\infty}e^{\lambda_1t}\nabla u(x,t)=a_1\nabla\phi_1(x),
\]
while
\[
a_1=\int_\Omega u_0(x)\phi_1(x)>0,
\]
since $u_0>0$ in $\Omega$. Thus
\begin{eqnarray*}
\lim_{t\rightarrow\infty}\nabla v(x,t)
&=&\lim_{t\rightarrow\infty}\nabla\log u(x,t)
=\lim_{t\rightarrow\infty}\frac{\nabla u(x,t)}{u(x,t)}
\displaystyle=\frac{\lim_{t\rightarrow\infty}e^{\lambda_1t}\nabla u(x,t)}
{\lim_{t\rightarrow\infty}e^{\lambda_1t}u(x,t)}\\[10pt]
&=&\frac{a_1\nabla\phi_1(x)}{a_1\phi_1(x)}
=\frac{\nabla\phi_1(x)}{\phi_1(x)}=\nabla\log\phi_1(x).
\end{eqnarray*}
Finally, note that $Z(x,y,t)\ls0$ for all $t\gs0$, so we deduce that
\[
\bsqu{\nabla\log\phi_1(y)-\nabla\log\phi_1(x)}\cdot X-\Phi(X)
=\lim_{t\rightarrow\infty}Z(x,y,t)\ls0,
\]
from which the desired estimate \eqref{est-13-11-23-20-56} follows easily and the theorem is proved.
\end{proof}

\section{Proof of Theorem \ref{spectral-gap-comparison-thm}}
All the preparations have been completed, so we are ready to prove Theorem \ref{spectral-gap-comparison-thm}
via a `double coordinate' approach (cf. \cite{Clutterbuck-2004-phD-thesis,Andrews-Clutterbuck-2011-JAMS}) .
In this process we will use some argument similar to but not exactly like
\cite{Andrews-Clutterbuck-2011-JAMS,Lu-Seminar-May-26-2011}.
\begin{proof}
Noticing that \eqref{lower-bound-esti-Psi} (in Appendix A) and Lipschitz continuity of
$\phi_2/\phi_1$, and choosing some suitably small $\delta_0>0$, we can easily show that
\begin{equation}\label{defn-14-6-20-22-36}
\delta_0\cdot\frac{\phi_2(y)}{\phi_1(y)}-\delta_0\cdot\frac{\phi_2(x)}{\phi_1(x)}
\ls\Psi(X),\quad\forall\,\,x,y\in\Omega,
\end{equation}
with $\Psi(s):=(\tilde{\phi}_2/\tilde{\phi}_1)(s/2)$ for all $0\ls s\ls d$.

Put
\[
v(x,t):=\delta_0\cdot\frac{\phi_2(x)}{\phi_1(x)}e^{-(\lambda_2-\lambda_1)t}
\overset{or}=\delta_0\cdot\frac{\phi_2(x)e^{-\lambda_2t}}{\phi_1(x)e^{-\lambda_1t}}.
\]
Obviously, $\phi_1(x)e^{-\lambda_1t}$ and $\phi_2(x)e^{-\lambda_2t}$ are
two smooth solutions of the parabolic Sch\"odinger equation
\[
u_t=\Delta u-Vu\qquad\hbox{in}\quad\Omega\times\mathbb{R}^+,\\
\]
\[
u=0\qquad\hbox{on}\quad\p\Omega\times\mathbb{R}^+.
\]
According to Proposition 3.1 in \cite{Andrews-Clutterbuck-2011-JAMS},
we know that $v$ is smooth on $\Omega\times\mathbb{R}^+$, and satisfies
the following Neumann heat equation with drift:
\begin{equation}\label{Neumann-heat-flow}
v_t=\Delta v+2\nabla\log\phi_1\cdot\nabla v\qquad\hbox{in}\quad\Omega\times[0,\infty),\\[5pt]
\end{equation}
\[
D_\nu v=0\qquad\hbox{on}\quad\p\Omega\times[0,\infty),
\]
where $\nu$ is defined as before.

Set $\sigma:=\tilde{\lambda}_2-\tilde{\lambda}_1$, we next define an evolving quantity
\[
Z_\ve(x,y,t):=v(y,t)-v(x,t)-e^{-\sigma t}\Psi(X)-\ve e^t,
\]

By virtue of \eqref{def-13a-1-13a-23-21} and \eqref{eq-13-12-7-18-31}, we can deduce that
\begin{eqnarray*}
LZ_\ve(x,y,t)&=&Lv(y,t)-Lv(x,t)-e^{-\sigma t}L\bbra{\Psi(X)}\\[5pt]
&=&\Delta v(y,t)-\Delta v(x,t)-4e^{-\sigma t}\Psi''(X).
\end{eqnarray*}
In view of the definition of $v(x,t)$ and \eqref{defn-14-6-20-22-36},
we know that $Z_\ve(x,y,0)<0$. We now claim that
\begin{equation}\label{less-than-0-13a-1-5}
Z_\ve(x,y,t)<0\qquad\hbox{for all}\quad(x,y)\in\bar{\Omega}\times\bar{\Omega},\,\,t\gs0.
\end{equation}
As in the proof of Theorem \ref{sharp-log-concavity-estimate}, we derive the assertion by contradiction.
If this is not true,
then there exists the first time, $t_0>0$, and points
$x_0\neq y_0\in\bar{\Omega}$ such that $Z_\ve(x_0,y_0,t_0)=0$. Thus,
\[
Z_\ve(x,y,t)<0\qquad\hbox{for all}\quad(x,y)\in\bar{\Omega}\times\bar{\Omega},\,\,\,0\ls t<t_0.
\]
For simplicity, we still denote $x_0$ and $y_0$ by $x$ and $y$ respectively.

There are two possibilities: Either both $x$ and $y$ are in the interior of $\Omega$,
or at least one of them lies in the boundary.

Now we consider the first case first: $x,\,y\in\Omega$.
According to the maximum principle, the maximality of $Z_\ve$ implies that at $(x,y,t_0)$,
\[
\left\{
  \begin{array}{lll}
D_tZ_\ve(x,y,t_0)\gs0,\\[5pt]
\nabla_xZ_\ve(x,y,t_0)=\nabla_yZ_\ve(x,y,t_0)=0,\\[5pt]
LZ_\ve(x,y,t_0)\ls0.
  \end{array}
\right.
\]
A direct calculation leads to
\begin{equation}\label{13-11-23-21-6}
\left\{
  \begin{array}{llll}
v_t(y,t_0)-v_t(x,t_0)+\sigma e^{-\sigma t_0}\Psi(X)-\ve e^t\gs0,\\[5pt]
-\nabla v(x,t_0)+e^{-\sigma t_0}\Psi'(X)\cdot\tilde{X}=0,\\[5pt]
\nabla v(y,t_0)-e^{-\sigma t_0}\Psi'(X)\cdot\tilde{X}=0,\\[5pt]
\Delta v(y,t_0)-\Delta v(x,t_0)-4e^{-\sigma t_0}\Psi''(X)\ls0.
  \end{array}
\right.
\end{equation}
Taking the dot product with $\nabla\log\phi_1(x)$ in the second equality,
with $\nabla\log\phi_1(y)$ in the third equality respectively, and summing successively, we have
\begin{eqnarray}\label{13-11-23-21-10}
&&\nabla\log\phi_1(y)\cdot\nabla v(y,t_0)-\nabla\log\phi_1(x)\cdot\nabla v(x,t_0)\\
&=&e^{-\sigma t_0}\Psi'(X)\bsqu{\nabla\log\phi_1(y)-\nabla\log\phi_1(x)}\cdot\tilde{X}.\nonumber
\end{eqnarray}

Noting that $\Psi'(X)>0$ and applying \eqref{est-13-11-23-20-56},
\eqref{13-11-23-21-6} and \eqref{13-11-23-21-10}, we have
\begin{eqnarray*}
0&\ls&v_t(y,t_0)-v_t(x,t_0)+\sigma e^{-\sigma t_0}\Psi(X)-\ve e^t\\
&=&\bsqu{\Delta v(y,t_0)-\Delta v(x,t_0)}
+2\squ{\nabla\log\phi_1(y)\nabla v(y,t_0)-\nabla\log\phi_1(x)\nabla v(x,t_0)}\\
&&+\sigma e^{-\sigma t_0}\Psi(X)-\ve e^t\\
&=&4e^{-\sigma t_0}\Psi''(X)
+e^{-\sigma t_0}\Psi'(X)
\bsqu{\nabla\log\phi_1(y)-\nabla\log\phi_1(x)}\cdot\tilde{X}\\
&&+\sigma e^{-\sigma t_0}\Psi(X)-\ve e^t\\
&\ls&4e^{-\sigma t_0}\Psi''(X)
+2e^{-\sigma t_0}\Psi'(X)\cdot(\log\tilde{\phi}_1)'\big(\frac{X}{2}\big)
+\sigma e^{-\sigma t_0}\Psi(X)-\ve e^t\\
&=&e^{-\sigma t_0}\Big[4\Psi''(X)+2\Psi'(X)\cdot(\log\tilde{\phi}_1)'\big(\frac{X}{2}\big)
+\sigma\Psi(X)\Big]-\ve e^t\\
&=&-\ve e^t<0.
\end{eqnarray*}
Here the last equality is due to \eqref{1-dim-Laplace-with-drift}. This is obviously a contradiction.

We next deal with the second case: $x$ or $y\in\p\Omega$.

(i) If $y\in\p\Omega$ and $0<X:=\abs{y-x}<d$, then 
\[
D_{\nu_y}Z_\ve(x,y,t_0)=D_{\nu_y}v(y,t_0)
-e^{-\sigma t_0}\Psi'(X)\cdot\tilde{X}\cdot\nu_y<0,
\]
where $\nu_y$ is the outer unit normal to $\Omega$ at $y$.
Here we used the Neumann boundary condition satisfies by $v$, $\Psi'(X)>0$ and
the strict convexity of $\Omega$ (which implies $\tilde{X}\cdot\nu_y>0$).
This implies $Z_\ve(x,y-s\nu_y,t_0)>0$ for $s$ small enough, contradicting the fact that
$Z_\ve(x,y,t)\ls0$ on $\bar{\Omega}\times\bar{\Omega}\times[0,t_0]$.
The case where $x\in\p\Omega$ and $0<X<d$ is similar.

(ii) If $x,\,y\in\p\Omega$ and $X=d$, then $D_{\nu_y}Z_\ve(x,y,t_0)=0$ since
$D_{\nu_y}v(y,t_0)=0$ and $\Psi'(X)=0$. On the other hand, it is easy to see that
all the tangential derivatives in $y$ of $Z_\ve$ vanish at $(x,y,t_0)$.
So, we get that $\nabla_yZ_\ve(x,y,t_0)=0$. Similarly, we can prove that $\nabla_xZ_\ve(x,y,t_0)=0$.
The remaining derivation proceeds exactly as that in the case where $x,\,y\in\Omega$.

So far, we have arrived at a contradiction if we are assuming that
$Z_\ve$ does not remain negative. Therefore, \eqref{less-than-0-13a-1-5}
always holds for all $x,\,y\in\bar{\Omega}$ and all $t\gs0$.

We next consider the long time behavior of solutions of \eqref{Neumann-heat-flow}.
By \eqref{less-than-0-13a-1-5}, we get that
\[
v(y,t)-v(x,t)-e^{-\sigma t}\Psi(X)
=\lim_{\ve\rightarrow0}Z_\ve(x,y,t)\ls0
\]
holds for all $(x,y,t)\in\bar{\Omega}\times\bar{\Omega}\times\mathbb{R}$. Thus,
\[
v(y,t)-v(x,t)\ls e^{-\sigma t}\Psi(X)\qquad\hbox{for all}\quad
(x,y,t)\in\bar{\Omega}\times\bar{\Omega}\times\mathbb{R},
\]
or, equivalently:
\[
\delta_0\cdot\frac{\phi_2(y)}{\phi_1(y)}-\delta_0\cdot\frac{\phi_2(x)}{\phi_1(x)}
\ls e^{-[\sigma-(\lambda_2-\lambda_1)t]}\Psi(X)
\qquad\hbox{for all}\quad(x,y,t)\in\bar{\Omega}\times\bar{\Omega}\times\mathbb{R}
\]
Once this is done, we conclude that $\sigma\ls\lambda_2-\lambda_1$. Otherwise,
suppose $\sigma>\lambda_2-\lambda_1$. It follows that
\[
\delta_0\cdot\frac{\phi_2(y)}{\phi_1(y)}-\delta_0\cdot\frac{\phi_2(x)}{\phi_1(x)}
\ls\Psi(X)\cdot\lim_{t\rightarrow\infty}e^{-[\sigma-(\lambda_2-\lambda_1]t}=0.
\]
So, we get
\[
\frac{\phi_2(y)}{\phi_1(y)}\ls\frac{\phi_2(x)}{\phi_1(x)}.
\]
Owing to the choice of $x$ and $y$ are quite arbitrary, we know easily that
\[
\frac{\phi_2(x)}{\phi_1(x)}\equiv\textrm{const.},
\]
which gives a contradiction. Therefore,
$\lambda_2-\lambda_1\gs\sigma$ as required.
Theorem is proved.
\end{proof}

\section{Examples and application}
Assume that $\tilde{V}(s)$ satisfies certain conditions so that the spectral gap
$\tilde{\lambda}_2-\tilde{\lambda}_1$ of problem \eqref{1-dim-Schrodinger}
is just $3\pi^2/d^2$ (cf., \cite{van-den-Berg-1983,Ashbaugh-Benguria-1989,Horvath-2003,Lavine-1994}).
In this situation, Theorem \ref{spectral-gap-comparison-thm} gives Corollary \ref{spectral-gap-cor}.
For instance, when $V$ is convex, $\tilde{V}=0$ is apparently a modulus of convexity for $V$.
Clearly, $\tilde{\lambda}_2-\tilde{\lambda}_1=3\pi^2/d^2$, hence $\lambda_2-\lambda_1\gs3\pi^2/d^2$.
Here we refer to \cite{Andrews-Clutterbuck-2011-JAMS} for more examples concerning the fundamental gap conjecture.

Next we give an application of Theorem \ref{spectral-gap-comparison-thm}.
Following the similar argument in \cite{Shi-Zhang-2007,Qian-Zhang-Zhu-2012},
we get that the following results:
\begin{lem}\label{lem-new-esti-of-gap}
The spectral gap $\tilde{\lambda}_2-\tilde{\lambda}_1$
of the problem \eqref{1-dim-Schrodinger}, satisfies
\begin{equation}\label{1-dim-new-esti-of-gap}
\tilde{\lambda}_2-\tilde{\lambda}_1\gs4s(1-s)\frac{\pi^2}{d^2}+2s\tilde{\alpha},
\quad\forall\,\,\,s\in (0,1),
\end{equation}
where $\tilde{\alpha}:=-\sup_{\tau\in(-\frac{d}{2},\frac{d}{2})}\big(\log\tilde{\phi}_1\big)''(\tau)$.
Moreover, we also have
\begin{equation}\label{1-dim-new-esti-of-2nd-eigen}
\tilde{\lambda}_2\gs(1+2s)\tilde{\lambda}_1+4s(1-s)\frac{\pi^2}{d^2}
+2s\inf_{(-\frac{d}{2},\frac{d}{2})}\big[(\tilde{\phi}'_1/\tilde{\phi}_1)^2-\tilde{V}\big],
\quad\forall\,\,\,s\in (0,1).
\end{equation}
\end{lem}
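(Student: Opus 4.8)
The plan is to work entirely with the one-dimensional eigenvalue problem \eqref{1-dim-Schrodinger} and its first two eigenfunctions $\tilde{\phi}_1,\tilde{\phi}_2$, imitating the test-function technique of \cite{Shi-Zhang-2007,Qian-Zhang-Zhu-2012}. The key observation is that $w:=\tilde{\phi}_2/\tilde{\phi}_1$ solves the drift equation \eqref{1-dim-Laplace-with-drift}, namely $w''+2(\log\tilde{\phi}_1)'w'=-(\tilde{\lambda}_2-\tilde{\lambda}_1)w$ on $(-\tfrac{d}{2},\tfrac{d}{2})$ with Neumann data $w'(\pm\tfrac d2)=0$, and $w$ changes sign exactly once. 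First I would fix $s\in(0,1)$ and introduce the weighted test function built from a power of $\tilde{\phi}_1$ times a sine profile — concretely, something of the form $g=\tilde{\phi}_1^{-2s}\sin\!\big(\tfrac{\pi}{d}(\tau+\tfrac d2)\big)$ or the analogous cosine/first-antinode profile adapted to the node of $w$ — and use it in the Rayleigh-type quotient attached to the drift operator $\mathcal L\tilde v:=\tilde v''+2(\log\tilde{\phi}_1)'\tilde v'$, whose natural weight is $\tilde{\phi}_1^2\,d\tau$.

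The core computation proceeds by substituting $\tilde v=\tilde{\phi}_1^{-s}\psi$ (or the corresponding ansatz) into the weighted Dirichlet form and expanding. The cross terms involving $(\log\tilde{\phi}_1)'$ are arranged to cancel, leaving a Schrödinger-type form in $\psi$ with an effective potential built from $(\log\tilde{\phi}_1)''$ and $\big((\log\tilde{\phi}_1)'\big)^2$; here I would invoke \eqref{eqn-14-5-23a-13-22}-type identities, i.e. $(\log\tilde{\phi}_1)''=\tilde V-\tilde{\lambda}_1-\big((\log\tilde{\phi}_1)'\big)^2$, to convert the $\big((\log\tilde{\phi}_1)'\big)^2$ term into $\tilde V-\tilde{\lambda}_1-(\log\tilde{\phi}_1)''$. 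Bounding $(\log\tilde{\phi}_1)''\le -\tilde\alpha$ from above (its definition as the negative supremum) and estimating the remaining sine-profile Dirichlet integral by the elementary eigenvalue $\pi^2/d^2$ for the interval of length $d$ yields the coefficient $4s(1-s)\pi^2/d^2+2s\tilde\alpha$ after collecting the factors of $s$ and $(1-s)$. Comparing the resulting quotient against $w=\tilde{\phi}_2/\tilde{\phi}_1$ via orthogonality (one must check the test function is orthogonal to the constant with respect to the weight $\tilde{\phi}_1^2$, or project off the constant) then forces $\tilde{\lambda}_2-\tilde{\lambda}_1\ge 4s(1-s)\pi^2/d^2+2s\tilde\alpha$, which is \eqref{1-dim-new-esti-of-gap}.

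The second inequality \eqref{1-dim-new-esti-of-2nd-eigen} I would obtain by repeating the same manipulation but keeping $\tilde V$ explicit rather than absorbing it, so that instead of $\tilde\alpha$ one is left with the quantity $\inf\big[(\tilde{\phi}_1'/\tilde{\phi}_1)^2-\tilde V\big]$, and instead of $\tilde{\lambda}_2-\tilde{\lambda}_1$ one tracks $\tilde{\lambda}_2$ against $(1+2s)\tilde{\lambda}_1$; the extra $2s\tilde{\lambda}_1$ is precisely the contribution of the $-\tilde{\lambda}_1$ term in the identity for $(\log\tilde{\phi}_1)''$ when it is \emph{not} cancelled. The main obstacle I anticipate is the boundary/orthogonality bookkeeping: one must verify that the chosen sine-type test function, after multiplication by $\tilde{\phi}_1^{-2s}$ and weighting by $\tilde{\phi}_1^2$, genuinely lies in the span orthogonal to $\tilde{\phi}_1^2$ (so that $\tilde{\lambda}_2$ rather than $\tilde{\lambda}_1$ is the relevant comparison eigenvalue) and that all integration-by-parts boundary terms vanish under the Neumann condition — the node of $\tilde{\phi}_2$ and the behavior of $\tilde{\phi}_1^{-2s}$ near the endpoints (where $\tilde{\phi}_1\to 0$) need a short but careful argument, exactly as in \cite{Shi-Zhang-2007,Qian-Zhang-Zhu-2012}.
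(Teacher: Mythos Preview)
Your proposal takes a different route from the paper and, as written, has a genuine gap. The paper does not use a Rayleigh-quotient/test-function argument at all. Instead it differentiates the drift equation \eqref{1-dim-Laplace-with-drift} once to obtain
\[
\tilde v''' + 2(\log\tilde\phi_1)'\tilde v'' + \bigl[2(\log\tilde\phi_1)'' + (\tilde\lambda_2-\tilde\lambda_1)\bigr]\tilde v' = 0,
\]
multiplies this by $(\tilde v')^{a-1}$ with $a>1$ and $s=1-1/a$, and integrates over $(-\tfrac d2,\tfrac d2)$. After integration by parts (the boundary terms vanish because $\tilde v'(\pm\tfrac d2)=0$) one arrives at the identity
\[
4s(1-s)\int_{-d/2}^{d/2}\Bigl[\bigl((\tilde v')^{a/2}\bigr)'\Bigr]^2
= \int_{-d/2}^{d/2}\bigl[2s(\log\tilde\phi_1)''+(\tilde\lambda_2-\tilde\lambda_1)\bigr]\bigl((\tilde v')^{a/2}\bigr)^2.
\]
Since $(\tilde v')^{a/2}$ vanishes at both endpoints, Wirtinger's inequality supplies the factor $\pi^2/d^2$ directly, and bounding $(\log\tilde\phi_1)''\le-\tilde\alpha$ yields \eqref{1-dim-new-esti-of-gap}. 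The second estimate \eqref{1-dim-new-esti-of-2nd-eigen} then follows by substituting $(\log\tilde\phi_1)''=\tilde V-\tilde\lambda_1-(\tilde\phi_1'/\tilde\phi_1)^2$ into the first. No trial functions, no orthogonality bookkeeping, no singular weights near $\partial(-\tfrac d2,\tfrac d2)$.

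The specific problem with your plan is the direction of the variational inequality. Inserting a trial function $g$ (your sine profile times a power of $\tilde\phi_1$) into the Rayleigh quotient for the drift operator yields an \emph{upper} bound for $\tilde\lambda_2-\tilde\lambda_1$, not the lower bound you need; your phrase ``comparing the resulting quotient against $w$ via orthogonality'' does not explain how an upper bound on a test quotient becomes a lower bound on the eigenvalue. If instead you mean to evaluate the quotient at the true eigenfunction $w=\tilde\phi_2/\tilde\phi_1$ and transform via $w=\tilde\phi_1^{-s}\psi$, then $\psi$ is determined (not a sine), and you still need a Poincar\'e-type lower bound for that specific $\psi$ --- which is exactly what the paper manufactures by passing to $\tilde v'$ and using its Dirichlet data. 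The endpoint singularity of $\tilde\phi_1^{-2s}$ that worried you is a symptom of this mismatch: the paper's argument never introduces negative powers of $\tilde\phi_1$, so the issue simply does not arise.
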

\begin{proof}
By differentiating \eqref{1-dim-Laplace-with-drift}, it leads to
\begin{equation}\label{3-order-ode-eqn-1-dim}
\tilde{v}'''+2(\log\tilde{\phi}_1)'\tilde{v}''
+\big[2(\log\tilde{\phi}_1)''+\tilde{\lambda}_2-\tilde{\lambda}_1\big]\tilde{v}'=0
\qquad\hbox{in}\quad\big(-\frac{d}{2},\frac{d}{2}\big).
\end{equation}

For any constant $a>1$, let $s:=1-\frac{1}{a}$. Clearly, $0<s<1$.
Multiplying \eqref{3-order-ode-eqn-1-dim} by $(\tilde{v}')^{a-1}$
and integrating over $(-\frac{d}{2}, \frac{d}{2})$, and then using
integration by parts and also the condition $\tilde{v}'(\pm\frac{d}{2})=0$, it follows that
\begin{eqnarray*}
&&4s(1-s)\int_{-\frac{d}{2}}^\frac{d}{2}\Big\{\big[(\tilde{v}')^\frac{a}{2}\big]'\Big\}^2dx\\
&=&\int_{-\frac{d}{2}}^\frac{d}{2}
\big[2s(\log\tilde{\phi}_1)''+\tilde{\lambda}_2-\tilde{\lambda}_1\big]\big[(\tilde{v}')^\frac{a}{2}\big]^2dx\\
&\ls&\big(-2s\tilde{\alpha}+\tilde{\lambda}_2-\tilde{\lambda}_1\big)
\int_{-\frac{d}{2}}^\frac{d}{2}\big[(\tilde{v}')^\frac{a}{2}\big]^2dx.
\end{eqnarray*}
Finally, since $(\tilde{v}')^\frac{a}{2}(\pm\frac{d}{2})=0$, then Wirtinger's inequality asserts that
\[
4s(1-s)\frac{\pi^2}{d^2}\ls
-2s\tilde{\alpha}+\tilde{\lambda}_2-\tilde{\lambda}_1,
\quad\forall\,\,\,s\in (0,1),
\]
from which \eqref{1-dim-new-esti-of-gap} follows at once.

Furthermore, by a direct calculation we also obtain
\[
(\log\tilde{\phi}_1)''=\tilde{V}-\tilde{\lambda}_1-(\tilde{\phi}'_1/\tilde{\phi}_1)^2.
\]
Substituting this into \eqref{1-dim-new-esti-of-gap},
it is no difficult to get \eqref{1-dim-new-esti-of-2nd-eigen}. The proof is now complete.
\end{proof}

As a consequence, we can derive from Theorem \ref{spectral-gap-comparison-thm} and Lemma
\ref{lem-new-esti-of-gap} to Corollary \ref{cor-new-esti-of-gap}.

\section{Appendix A}
For the reader's convenience, we list some important properties of
functions $\Phi$ and $\Psi$ and provide some elementary proofs below.
These properties have already been used or will be used in other parts of this paper.

\begin{prop}\label{1st-eigenfunc-est}
Let $\tilde{\phi}_1$ be the first eigenfunction of problem \eqref{1-dim-Schrodinger}.
Then the function $\Phi(s)$, which is defined by \eqref{defn-Phi-func}, satisfies that
\begin{equation}\label{phi-1st-eigenfunc-est}
-c_*-\frac{4}{d-s}\ls\Phi(s)\ls c_*-\frac{4}{d-s},\quad\forall\,\,\,0\ls s<d,
\end{equation}
with the constant $c_*$ depends only on $\tilde{\phi}_1$.
\end{prop}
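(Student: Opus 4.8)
The plan is to collapse the two-sided estimate into the boundedness of a single auxiliary function. Put $g(t):=(\log\tilde\phi_1)'(t)=\tilde\phi_1'(t)/\tilde\phi_1(t)$, which is $C^2$ on $(-\frac d2,\frac d2)$ since $\tilde\phi_1>0$ there (and $\tilde\phi_1\in C^2([-\frac d2,\frac d2])$ because $\tilde\phi_1''=(\tilde V-\tilde\lambda_1)\tilde\phi_1$ with $\tilde V\in C^1$). For $s\in[0,d)$ write $t=s/2\in[0,\frac d2)$; then from the definition \eqref{defn-Phi-func} of $\Phi$,
\[
\Phi(s)+\frac{4}{d-s}=2g(t)+\frac{2}{\,\frac d2-t\,}=2\,\Bbra{g(t)+\frac{1}{\,\frac d2-t\,}}=:2\,h(t).
\]
Thus it suffices to show that $h$ is bounded on $[0,\frac d2)$ by a constant depending only on $\tilde\phi_1$; then $c_*:=2\sup_{[0,d/2)}|h|$ yields \eqref{phi-1st-eigenfunc-est}.

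On any sub-interval $[0,\frac d2-\delta]$ the function $h$ is continuous, since the denominators $\tilde\phi_1(t)$ and $\frac d2-t$ are bounded below by positive constants there, hence $h$ is bounded on it. The only genuine issue is the behaviour as $t\uparrow\frac d2$, where bounding $g(t)$ and $(\frac d2-t)^{-1}$ separately is hopeless (each is of size $(\frac d2-t)^{-1}$); one must extract their cancellation, and this is the hard part. I would do it by clearing denominators: write $h=N/D$ with
\[
N(t):=\tilde\phi_1'(t)\Bbra{\tfrac d2-t}+\tilde\phi_1(t),\qquad D(t):=\tilde\phi_1(t)\Bbra{\tfrac d2-t}.
\]
A one-line differentiation (the $\tilde\phi_1'$ terms cancel) gives $N'(t)=\tilde\phi_1''(t)(\frac d2-t)$, and $N(\frac d2)=0$ since $\tilde\phi_1(\frac d2)=0$; integrating from $t$ to $\frac d2$,
\[
|N(t)|\le\tfrac12\,\bnorm{\tilde\phi_1''}_{C([-d/2,d/2])}\Bbra{\tfrac d2-t}^{2}.
\]
For $D$ I invoke the Hopf-type fact in item (ii), which in one dimension reads $\tilde\phi_1'(\frac d2)=-c$ with $c>0$: by continuity of $\tilde\phi_1'$ choose $\delta>0$ with $\tilde\phi_1'\le-\frac c2$ on $[\frac d2-\delta,\frac d2]$, so $\tilde\phi_1(t)=-\int_t^{d/2}\tilde\phi_1'\ge\frac c2(\frac d2-t)$ and hence $D(t)\ge\frac c2(\frac d2-t)^2$ there. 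Dividing, $|h(t)|\le\bnorm{\tilde\phi_1''}_{C([-d/2,d/2])}/c$ for $t\in[\frac d2-\delta,\frac d2)$.

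Combining the two regimes gives $\sup_{[0,d/2)}|h|\le\max\bset{\,\bnorm{\tilde\phi_1''}_{C([-d/2,d/2])}/c,\ \sup_{[0,d/2-\delta]}|h|\,}$, a quantity determined by $\tilde\phi_1$ alone (the constant $c$, the length $\delta$, and the two suprema all depend only on $\tilde\phi_1$), so the proposition follows with $c_*$ twice this number. An equivalent but slightly slicker route for the write-up is to factor $\tilde\phi_1(t)=(\frac d2-t)\psi(t)$: Taylor's formula with integral remainder shows $\psi\in C^1([0,\frac d2])$ with $\psi(\frac d2)=c>0$ and $\psi>0$ throughout, and then a short computation gives simply $h(t)=\psi'(t)/\psi(t)$, manifestly continuous — hence bounded — on $[0,\frac d2]$. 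Either way, the sole obstacle is isolating the cancellation near $t=\frac d2$; everything else is routine continuity and compactness.
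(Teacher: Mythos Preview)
Your argument is correct. In fact, the ``slicker route'' you sketch at the end is exactly the paper's proof: the paper factors $\tilde\phi_1(t)=(\tfrac d2-t)f(t)$ (citing the Malgrange division theorem rather than Taylor with integral remainder), observes $f>0$ on $[0,\tfrac d2]$, computes $\Phi(s)=2\dfrac{f'(s/2)}{f(s/2)}-\dfrac{4}{d-s}$, and takes $c_*=2\|f'/f\|_{C([0,d/2])}$. Your $\psi$ is the paper's $f$, and your $h=\psi'/\psi=f'/f$.

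Your primary $N/D$ argument is a more hands-on variant of the same idea: instead of quoting a factorization lemma, you extract the cancellation near $t=\tfrac d2$ by direct estimation using Hopf's sign condition $\tilde\phi_1'(\tfrac d2)<0$ and the bound $\|\tilde\phi_1''\|_\infty$. This is slightly more elementary (no appeal to Malgrange or even Taylor-with-remainder) at the cost of a few more lines; it also makes the constant $c_*$ a little more explicit in terms of $\|\tilde\phi_1''\|_\infty$ and the Hopf constant $c$. Either version is fine here.
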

\begin{proof}
Since $\tilde{\phi}_1(\frac{d}{2})=0$ and $\tilde{\phi}_1(t)>0$
for any $0\ls t<\frac{d}{2}$, then by Eq. \eqref{1-dim-Schrodinger}
we know that $\tilde{\phi}_1'(\frac{d}{2})<0$.
By Malgrange Theorem and $\tilde{\phi}_1'(\frac{d}{2})<0$, we know that
\begin{equation}\label{tilde-phi-1}
\tilde{\phi}_1(t)=(\frac{d}{2}-t)f(t),\quad\forall\,\,\,0\ls t\ls\frac{d}{2},
\end{equation}
where $f(t): [0,\frac{d}{2}]\mapsto\mathbb{R}$ is a smooth function,
which satisfies that $f(t)>0$ for any $0\ls t\ls \frac{d}{2}$. Thus
\[
\Phi(s)=2(\frac{\tilde{\phi}'_1}{\tilde{\phi}_1})(\frac{s}{2})
=2\frac{f'(\frac{s}{2})}{f(\frac{s}{2})}-\frac{4}{d-s},\quad\forall\,\,\,0\ls s<d.
\]
Taking $c_*:=2\norm{f'/f}_{C([0,\frac{d}{2}])}$,
the above equality implies the conclusion.
\end{proof}
\begin{prop}\label{Psi-properity}
Let $\tilde{\phi}_1$ and $\tilde{\phi}_2$ be the first two
eigenfunctions of problem \eqref{1-dim-Schrodinger}, respectively.
Set $\Psi(s):=(\tilde{\phi}_2/\tilde{\phi}_1)(s/2)$ for all $s\in[0,d)$.
Then $\Psi$ can be extended to $[0,d]$ as a smooth function, and also satisfies that
\begin{equation}\label{Psi-properity-incr-conca}
\Psi'(d)=0,\quad\hbox{and}\quad\Psi'(s)>0,\quad\Psi''(s)<0\qquad\hbox{for all}\quad0\ls s<d.
\end{equation}
Moreover, we also have
\begin{equation}\label{lower-bound-esti-Psi}
\Psi(s)\gs\bar{c}s\quad\hbox{for all}\quad0\ls s\ls d,
\end{equation}
with $\bar{c}:=\Psi(d)/d>0$.
\end{prop}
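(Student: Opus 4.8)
The plan is to establish Proposition \ref{Psi-properity} in three stages: first the smooth extension across $s=d$, then the monotonicity and concavity properties \eqref{Psi-properity-incr-conca}, and finally the linear lower bound \eqref{lower-bound-esti-Psi}.

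\textbf{Smooth extension.} By property (iii) in the introduction applied to the one-dimensional problem, the ratio $\tilde\phi_2/\tilde\phi_1$ extends smoothly to $[-\frac{d}{2},\frac{d}{2}]$. Hence $\Psi(s)=(\tilde\phi_2/\tilde\phi_1)(s/2)$ extends smoothly to $[0,d]$. Alternatively, one can argue directly: by the Malgrange-type factorization used in Proposition \ref{1st-eigenfunc-est}, write $\tilde\phi_1(t)=(\frac{d}{2}-t)f(t)$ and $\tilde\phi_2(t)=(\frac{d}{2}-t)g(t)$ with $f,g$ smooth and $f>0$ on $[0,\frac{d}{2}]$, so $\tilde\phi_2/\tilde\phi_1=g/f$ is smooth there.

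\textbf{Monotonicity and concavity.} Recall from \eqref{1-dim-Laplace-with-drift} that $\tilde v:=\tilde\phi_2/\tilde\phi_1$ satisfies $\tilde v''+2(\log\tilde\phi_1)'\tilde v'=-\sigma\tilde v$ on $(-\frac d2,\frac d2)$ with $\sigma=\tilde\lambda_2-\tilde\lambda_1>0$ and Neumann condition $\tilde v'(\pm\frac d2)=0$; moreover $\tilde v$ changes sign exactly once (it is a second eigenfunction) and after a choice of sign we may take $\tilde v$ increasing near $-\frac d2$. Since $\Psi(s)=\tilde v(s/2)$, we have $\Psi'(s)=\frac12\tilde v'(s/2)$ and $\Psi'(d)=\frac12\tilde v'(\frac d2)=0$. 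To get $\Psi'>0$ and $\Psi''<0$ on $[0,d)$, I would work with $w:=\tilde v'$, which by differentiating the ODE satisfies \eqref{3-order-ode-eqn-1-dim}, i.e. a linear second-order equation $w''+2(\log\tilde\phi_1)'w'+[2(\log\tilde\phi_1)''+\sigma]w=0$ with $w(\pm\frac d2)=0$. One shows $w>0$ on the open interval: if $w$ vanished at an interior point it would force $\tilde v$ to have an interior critical point on each side, giving $\tilde v$ too many sign changes or nodal domains, contradicting that $\tilde\phi_2$ is the \emph{second} eigenfunction (a Courant-type nodal count, applied to the weighted Sturm–Liouville problem with weight $\tilde\phi_1^2$). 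Thus $\tilde v'>0$ on $(-\frac d2,\frac d2)$, giving $\Psi'(s)>0$ for $0\le s<d$. For $\Psi''<0$: from the ODE, $\tilde v''=-2(\log\tilde\phi_1)'\tilde v'-\sigma\tilde v$. On the half where $\tilde v<0$ this is not immediate, so instead I would use the log-concavity of $\tilde\phi_1$ (guaranteed since $\tilde V$ is weakly convex, by Brascamp–Lieb, as noted in the Remark) together with the first-eigenfunction comparison: actually the cleanest route is to note $\Psi''(s)=\frac14\tilde v''(s/2)$ and to use that by \eqref{est-13-11-23-20-56} in dimension one, $(\log\tilde\phi_1)'$ is decreasing, combined with a maximum-principle argument on $\tilde v''$ itself via \eqref{3-order-ode-eqn-1-dim} to rule out an interior zero of $\tilde v''$; the boundary behavior $\tilde v''(-\frac d2)<0$ (from $\tilde v$ increasing and concave-opening there, or directly from the ODE at the endpoint where $\tilde v'=0$ gives $\tilde v''=-\sigma\tilde v>0$ — wait, this needs care with signs). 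I will fix the sign of $\tilde\phi_2$ so that $\tilde v(\frac d2^-)>0$, forcing $\tilde v<0$ near $-\frac d2$; then at $s=d$, $\Psi''(d)=\frac14\tilde v''(\frac d2)=-\frac14\sigma\tilde v(\frac d2)<0$, and propagate negativity inward by showing $\tilde v''$ cannot first vanish at an interior point using \eqref{3-order-ode-eqn-1-dim}.

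\textbf{Linear lower bound.} Once $\Psi(0)\ge 0$ (which holds since $\Psi(0)=\tilde v(0)$ and by symmetry considerations $\tilde v(0)$ can be arranged $\ge 0$, or at worst $\Psi(0)$ is finite), $\Psi'>0$, and $\Psi''<0$ on $[0,d)$ with $\Psi'(d)=0$, concavity gives that the chord from $(0,\Psi(0))$ to $(d,\Psi(d))$ lies below the graph, hence $\Psi(s)\ge \Psi(0)+\frac{\Psi(d)-\Psi(0)}{d}s$. If one can show $\Psi(0)\ge 0$ — and in the generic setup where $\tilde\phi_2$ is odd and $\tilde\phi_1$ is even one has $\Psi(0)=0$ exactly — this reduces to $\Psi(s)\ge \frac{\Psi(d)}{d}s=\bar c s$ with $\bar c=\Psi(d)/d$, and $\bar c>0$ because $\Psi(d)=\Psi(0)+\int_0^d\Psi'>0$. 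That is \eqref{lower-bound-esti-Psi}.

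\textbf{Main obstacle.} The delicate point is the concavity $\Psi''<0$ on all of $[0,d)$: unlike $\Psi'>0$, which follows cleanly from the nodal-count argument applied to \eqref{3-order-ode-eqn-1-dim}, ruling out an interior zero of $\tilde v''$ requires combining the endpoint sign with a maximum-principle or Sturm-comparison argument for the equation satisfied by $\tilde v''$ (obtained by differentiating \eqref{3-order-ode-eqn-1-dim} once more), and one must handle the coefficient $2(\log\tilde\phi_1)''+\sigma$ carefully — it need not have a fixed sign, so a naive maximum principle fails and one leans on the sharp log-concavity estimate \eqref{est-13-11-23-20-56} to control $(\log\tilde\phi_1)''$. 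I expect this to be the technically heaviest step; the extension and the linear bound are comparatively routine.
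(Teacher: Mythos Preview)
Your approach is more roundabout than necessary, and the concavity step---which you correctly flag as your main obstacle---has a real gap that the paper avoids entirely by a device you are not using.

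Two points you are missing. First, by Definition~1.1 the potential $\tilde V$ is \emph{even}; hence $\tilde\phi_1$ is even and $\tilde\phi_2$ is odd. This gives $\Psi(0)=\tilde\phi_2(0)/\tilde\phi_1(0)=0$ exactly (not merely ``can be arranged $\ge 0$'' in a ``generic setup''), and with the standard sign choice $\tilde\phi_2>0$ on $(0,d/2)$. Second, the ODE \eqref{1-dim-Laplace-with-drift} can be rewritten in self-adjoint form as $(\tilde\phi_1^{\,2}\,\tilde v')'=-\sigma\,\tilde\phi_1^{\,2}\,\tilde v=-\sigma\,\tilde\phi_1\tilde\phi_2$. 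Integrating from $t$ to $d/2$ and using the Neumann condition $\tilde v'(d/2)=0$ gives the closed formula
\[
\tilde v'(t)=\frac{\sigma}{\tilde\phi_1(t)^{2}}\int_t^{d/2}\tilde\phi_1(\tau)\,\tilde\phi_2(\tau)\,d\tau,
\]
which is manifestly positive on $[0,d/2)$ since both factors in the integrand are positive there. This single line replaces your entire Sturm/nodal-count argument on $w=\tilde v'$ for $\Psi'>0$. For $\Psi''<0$ the paper then just substitutes this expression back into \eqref{1-dim-Laplace-with-drift} (equivalently, differentiates once) to obtain an explicit formula for $\tilde v''$ on $[0,d/2)$ and reads off the sign. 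No maximum principle for $\tilde v''$, no further differentiation of \eqref{3-order-ode-eqn-1-dim}, and no appeal to the sharp log-concavity estimate \eqref{est-13-11-23-20-56} is needed.

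So your ``main obstacle'' is an artifact of not passing to the integral representation; once you have it, concavity is no harder than monotonicity. Your treatment of the smooth extension and of the linear lower bound \eqref{lower-bound-esti-Psi} via concavity and $\Psi(0)=0$ is fine and matches the paper.
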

\begin{proof}
Clearly, $\Psi$ can be extended to $[0,d]$ as a smooth function.
It directly derives from Eq. \eqref{Laplace-with-drift} and the condition
$(\tilde{\phi}_2/\tilde{\phi}_1)'(\frac{d}{2})=0$ that
\[
\big(\frac{\tilde{\phi}_2}{\tilde{\phi}_1}\big)'(t)
=\frac{\sigma}{\big[\tilde{\phi}_1(t)\big]^2}
\int_t^{\frac{d}{2}}\tilde{\phi}_1(\tau)\tilde{\phi}_2(\tau)\,\mathrm{d}\tau
\qquad\hbox{for all}\quad0\ls t\ls\frac{d}{2},
\]
and
\[
\big(\frac{\tilde{\phi}_2}{\tilde{\phi}_1}\big)''(t)
=(-\sigma)\Big\{\frac{2}{[\tilde{\phi}_1(t)]^3}
\int_t^{\frac{d}{2}}\tilde{\phi}_1(\tau)\tilde{\phi}_2(\tau)\,\mathrm{d}\tau
+\frac{\tilde{\phi}_2(t)}{\tilde{\phi}_1(t)}\Big\}<0
\quad\hbox{for all}\quad0\ls t\ls\frac{d}{2}.
\]
In addition, since $\tilde{\phi}_1(\tau)>0$ and $\tilde{\phi}_2(\tau)>0$ for all
$0<\tau<\frac{d}{2}$, then \eqref{Psi-properity-incr-conca} obviously holds.
Furthermore, $\Psi(0)=0$ since $\tilde{\phi}_2(0)=0$.
Note that $\Psi(0)=0$ and \eqref{Psi-properity-incr-conca} really implying that
$\Psi(s)$ is strictly increasing and concave on $[0,d]$, so we easily get that
$\Psi(s)\gs\bar{c}s$. The Proposition follows.
\end{proof}
\begin{rem}
Andrews et al \cite[the proof of Proposition 3.2]{Andrews-Clutterbuck-2011-JAMS}
had already pointed out: $\Psi(s)$ is positive on $(0,d)$ and has the positive first-order
derivative at $s=0$, so is bounded below by $\bar{c}s$ for some small $\bar{c}>0$.
\end{rem}

\section{Appendix B}
In this section, we will construct an auxiliary function satisfying the inequality \eqref{Phi-log-concavity-u0}.

Let $\Omega\subset\mathbb{R}^n$ be a bounded open domain with smooth boundary $\p\Omega$. Define a distance function
\[
\rho(x):=\dist(x,\p\Omega)=\inf_{z\in\p\Omega}\abs{x-z}\qquad\hbox{for all}\quad x\in\bar{\Omega}.
\]
Clearly, $0\ls\rho(x)\ls \frac{d}{2}$ on $\bar{\Omega}$, and $\rho(x)=0$ if and only if $x\in\p\Omega$.
For $\e>0$, define a domain
\[
\Gamma_\e:=\set{x\in\bar{\Omega};\,\,\rho(x)\ls\e}.
\]
Denote by $\vec{n}$ the unit inward normal vector field on $\p\Omega$. According to \S 14.6 Appendix  
in \cite{Gilbarg-Trudinger}, we can find a sufficient small $\e_*=\e_*(n,\Omega)>0$ so that $\rho$ is smooth on
$\Gamma_{\e_*}$, and for any $x\in\Gamma_{\e_*}$, there exists a unique $\bar{x}\in\p\Omega$
such that
\[
\rho(x)=\abs{x-\bar{x}}\qquad\hbox{and}\qquad\nabla\rho(x)=\vec{n}(\bar{x}).
\]
In particular, $\nabla\rho\equiv\vec{n}$ on $\p\Omega$.
Obviously, $\abs{\nabla\rho}\equiv1$ on $\Gamma_{\e_*}$.

\begin{lem}\label{lem-13-12-31-56}
Let $\Omega\subset\mathbb{R}^n$ be a bounded strictly convex domain
with smooth boundary $\p\Omega$ and diameter $d=d(\Omega)$.
Then there exist two constants $\e_0=\e_0(n,\Omega)$
and $\theta_0=\theta_0(n,\Omega)>0$, such that
\[
\nabla\rho(x)\cdot\frac{y-x}{\abs{y-x}}\gs\theta_0>0,
\]
whenever $x\in\Gamma_{\e_0}$ and $y\in\bar{\Omega}\setminus B_{\frac{d}{2}}(x)$.
\end{lem}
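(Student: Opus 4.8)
## Proof proposal for Lemma 6.2

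\textbf{Overall approach.} The plan is to exploit the strict convexity of $\Omega$, which guarantees that the boundary curves away from each of its tangent hyperplanes at a definite rate. Near the boundary $\nabla\rho(x)$ is (by the discussion preceding the lemma) the inward unit normal $\vec n(\bar x)$ at the nearest boundary point $\bar x$. So the inequality to be proved, $\nabla\rho(x)\cdot\frac{y-x}{|y-x|}\ge\theta_0$, says that every $y\in\bar\Omega$ that is at distance at least $d/2$ from $x$ lies strictly on the inward side of the hyperplane through $x$ normal to $\vec n(\bar x)$, uniformly. I would prove this by a compactness argument: reduce to a statement about boundary points and their normals, and extract $\theta_0>0$ from continuity of the relevant functions on the compact set $\p\Omega\times\p\Omega$ (minus a neighbourhood of the diagonal), using uniform convexity to rule out the degenerate limit.

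\textbf{Key steps, in order.}
\begin{description}
  \item[Step 1 (Boundary version).] First I would establish the claim for $x\in\p\Omega$ itself, where $\nabla\rho(x)=\vec n(x)$. Since $\Omega$ is smooth and uniformly convex, there is a constant $\kappa_0>0$ bounding the second fundamental form of $\p\Omega$ from below. A standard consequence is the interior-cone / supporting-paraboloid estimate: for $x,z\in\p\Omega$,
  \[
  \vec n(x)\cdot\frac{z-x}{|z-x|}\ \ge\ c_1\,|z-x|
  \]
  for some $c_1=c_1(n,\Omega)>0$. For $y$ in the interior one gets $\vec n(x)\cdot\frac{y-x}{|y-x|}\ge c_1\,(\text{something comparable to }|y-x|)$ by writing $y$ as a convex combination, or more simply by noting the half-space $\{w:\vec n(x)\cdot(w-x)\ge c_1|w-x|^2/2\}$-type region contains $\Omega$. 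Then, restricting to $y$ with $|y-x|\ge d/2$, this yields $\vec n(x)\cdot\frac{y-x}{|y-x|}\ge c_1 d/2=:2\theta_0$.
  \item[Step 2 (Transfer to the tubular neighbourhood).] For $x\in\Gamma_{\e_0}$ write $x=\bar x-\rho(x)\,\vec n(\bar x)$ with $\bar x\in\p\Omega$, so $\nabla\rho(x)=\vec n(\bar x)$. For $y\in\bar\Omega$ with $|y-x|\ge d/2$, compare $\frac{y-x}{|y-x|}$ with $\frac{y-\bar x}{|y-\bar x|}$. Since $|x-\bar x|=\rho(x)\le\e_0$ and $|y-x|\ge d/2$, both denominators are bounded below and the difference of the two unit vectors is $O(\e_0)$; also $|y-\bar x|\ge d/2-\e_0$. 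Hence
  \[
  \nabla\rho(x)\cdot\frac{y-x}{|y-x|}\ \ge\ \vec n(\bar x)\cdot\frac{y-\bar x}{|y-\bar x|}\ -\ C\e_0\ \ge\ 2\theta_0-C\e_0,
  \]
  using Step 1 at the boundary point $\bar x$ (note $|y-\bar x|\ge d/2-\e_0$ is still large enough after shrinking, or redo Step 1 with $d/2$ replaced by $d/4$ to leave room).
  \item[Step 3 (Choice of constants).] Now fix $\e_0=\e_0(n,\Omega)>0$ small enough that $C\e_0\le\theta_0$, so that the right-hand side is $\ge\theta_0>0$. This $\theta_0$ and $\e_0$ depend only on $n$ and $\Omega$ (through the diameter $d$ and the convexity constant $\kappa_0$), as required.
\end{description}

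\textbf{Main obstacle.} The delicate point is Step 1: converting ``uniformly convex'' into the quantitative one-sided estimate $\vec n(x)\cdot(z-x)\ge c_1|z-x|^2$ valid for \emph{all} $z\in\p\Omega$, not just nearby ones. Locally this is just Taylor expansion of the boundary graph with a positive-definite Hessian; globally one needs either a covering/compactness argument on $\p\Omega\times\p\Omega$ together with the fact that $\Omega$ lies on one side of each tangent hyperplane (so the inner product is at least nonnegative, hence by compactness bounded below by $c_1|z-x|$ once $|z-x|$ is bounded below, and by the local Taylor estimate when $|z-x|$ is small), or an appeal to a standard reference (e.g. the interior sphere condition with a uniform radius, which holds for uniformly convex smooth domains). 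Everything after that is routine perturbation in the $\e_0$-tube. I would also remark that the hypothesis $y\in\bar\Omega\setminus B_{d/2}(x)$ is exactly what is needed so that $y$ cannot be close to $x$ (where the estimate would be vacuous or fail in direction) — it forces $y$ to sit a definite distance away on the far side, where convexity gives the uniform gap.
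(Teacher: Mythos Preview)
Your proposal is correct and takes a genuinely different route from the paper's proof. The paper argues by a soft compactness argument: it first observes (using uniform convexity and an inscribed-sphere condition) that for $\e_0$ small enough the hyperplane through $z\in\Gamma_{\e_0}$ perpendicular to $\nabla\rho(z)$ does not meet $\bar\Omega\setminus B_{d/2}(z)$, which gives \emph{pointwise} strict positivity of
\[
\theta(x):=\min\Big\{\nabla\rho(x)\cdot\frac{y-x}{|y-x|}:\ y\in\bar\Omega\setminus B_{d/2}(x)\Big\};
\]
it then sets $\theta_0=\min_{\Gamma_{\e_0}}\theta$ and uses continuity of $\theta$ on the compact set $\Gamma_{\e_0}$ to conclude $\theta_0>0$ by contradiction. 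Your approach is instead quantitative: you extract the supporting-paraboloid (exterior tangent ball) estimate $\vec n(x)\cdot(y-x)\ge c_1|y-x|^2$ directly from the lower bound on the second fundamental form, obtain an explicit $\theta_0$ in terms of $d$ and the convexity constant $\kappa_0$ for boundary points, and then perturb into the $\e_0$-tube. Your route yields more information (explicit constants) at the price of having to justify the global paraboloid containment --- which you correctly identify as the one nontrivial step and correctly sketch how to close (local Taylor plus compactness, or the uniform exterior sphere condition). The paper's route is softer and quicker to write but gives no explicit $\theta_0$. Both are standard and valid; one minor cosmetic point is that with the paper's convention $\vec n$ is the \emph{inward} normal, so in your Step~2 one should write $x=\bar x+\rho(x)\vec n(\bar x)$, which of course does not affect the argument.
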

\begin{figure}
\centering
\includegraphics[width=6.5cm,height=4cm]{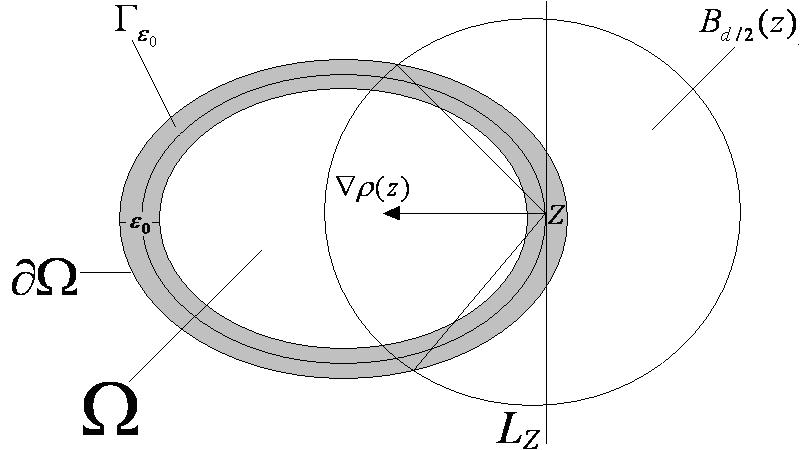}\\
\caption{e.g., $\Omega\subset\mathbb{R}^2$ is bounded strictly convex.}\label{figure}
\end{figure}
\begin{proof}
Since $\Omega$ has smooth boundary and is uniformly convex,
then for any $y\in\p\Omega$, there exists an inscribed sphere $B_y\subset\Omega$
such that $B_y\cap\p\Omega=\{y\}$. Denote by $R_y$ the radius of inscribed sphere $B_y$.
Set $R_0:=\min_{y\in\p\Omega}R_y$.
For $z\in\Gamma_{\e_*}$, let $L_z$ denote a hyperplane, which passes through $z$
and is perpendicular to $\nabla\rho(z)$. By the uniformly convexity of $\Omega$ again,
we know that there exists a constant $\e_0\ls\min\{\e_*,R_0,\frac{d}{2}\}$, such that
$L_z$ is tangent to the hypersurface $\{x\in\Omega\,\big|\,\rho(x)=\rho(z)\}$ at $z$, and
\begin{equation}\label{ineq-13a-1-1-7-16}
L_z\cap\bar{\Omega}\setminus B_{\frac{d}{2}}(z)=\emptyset,
\end{equation}
whenever $z\in\Gamma_{\e_0}$.
For $x\in\Gamma_{\e_0}$, we define a function
\[
\theta(x):=\min\Big\{\nabla\rho(x)\cdot\frac{y-x}{\abs{y-x}}\,:
\quad y\in\bar{\Omega}\setminus B_{\frac{d}{2}}(x)\Big\}
\]
Clearly, $\theta(x)$ is continuous on $\Gamma_{\e_0}$ and $\theta(x)>0$
whenever $x\in\Gamma_{\e_0}$. Set $\theta_0:=\min_{x\in\Gamma_{\e_0}}\theta(x)$.
First, $\theta_0\gs0$ is obvious. We now claim that $\theta_0>0$. Otherwise, suppose $\theta_0=0$.
Then the continuity of $\theta(x)$ yields that
there exist two points $x_0\in\Gamma_{\e_0}$
and $y_0\in\bar{\Omega}\setminus B_{\frac{d}{2}}(x_0)$ such that
\[
\nabla\rho(x_0)\cdot\frac{y_0-x_0}{\abs{y_0-x_0}}=0.
\]
But this contradicts \eqref{ineq-13a-1-1-7-16}. So, we have
\[
\theta(x)\gs\theta_0>0\qquad\hbox{for all}\quad x\in\Gamma_{\e_0},
\]
which implies the conclusion.
\end{proof}

\begin{prop}\label{prop-13a-1-2-12-1}
Let $\Omega$ and $d$ be as in Lemma \ref{lem-13-12-31-56}.
Then there exist two constants $\kappa=\kappa(n,\Omega)$ and
$c_0=c_0(n,\Omega,\tilde{V},\tilde{\phi}_1)$ such that the following inequality
\begin{equation}\label{ineq-13-12-18-22-2}
\Big[\nabla\log\rho^\kappa(y)-\nabla\log\rho^\kappa(x)\Big]\cdot\tilde{X}\ls\Phi(X)+c_0X,
\end{equation}
holds whenever $x\neq y$ in $\Omega$. Here $\Phi(s)$ is defined as before.
\end{prop}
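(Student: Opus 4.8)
The goal is to show that a suitable power $\rho^\kappa$ of the distance function satisfies the differential inequality \eqref{ineq-13-12-18-22-2}, since an initial datum $u_0$ comparable to $\rho^\kappa$ near $\p\Omega$ (and smooth, positive inside) then satisfies \eqref{Phi-log-concavity-u0} up to adjusting for the $c_0X$ term and the behaviour away from the boundary. The strategy is to split $\Omega\times\Omega$ into three regimes: (a) both $x,y$ in the interior region $\Omega\setminus\Gamma_{\e_0}$, where $\rho$ is uniformly bounded above and below and both $\nabla\log\rho^\kappa$ are bounded, so the left side is $\le c_0 X$ by the mean value theorem and $\Phi(X)$ is bounded below by Proposition \ref{1st-eigenfunc-est}; (b) at least one point, say $x\in\Gamma_{\e_0}$, with $X=|y-x|$ large (bounded below), so again the left side is controlled by a constant times $X$ after using $|\nabla\log\rho^\kappa|\le \kappa/\rho$ and the lower bound $\rho\ge\e_0$ away from $\Gamma_{\e_0}$; (c) the delicate regime where $x$ is close to $\p\Omega$ and $y$ is near $x$ (so $X$ small), which is where the singular term $-4/(d-s)$ on the right of \eqref{phi-1st-eigenfunc-est} — equivalently the singular behaviour of $(\log\tilde\phi_1)'$ near $d/2$ — must be matched by the singularity $\kappa\nabla\log\rho$.

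In regime (c), the key computation is to expand $[\nabla\log\rho^\kappa(y)-\nabla\log\rho^\kappa(x)]\cdot\tilde X = \kappa[\nabla\log\rho(y)-\nabla\log\rho(x)]\cdot\tilde X$. Writing $\nabla\log\rho = \nabla\rho/\rho$ and using $|\nabla\rho|\equiv1$ on $\Gamma_{\e_*}$, one gets a term $\kappa\nabla\rho(y)\cdot\tilde X\,(1/\rho(y)-1/\rho(x))$ plus a term $\kappa(\nabla\rho(y)-\nabla\rho(x))\cdot\tilde X/\rho(x)$; the second is $O(X/\rho(x))$ times the modulus of continuity of $\nabla\rho$, which by strict convexity is controlled (the Hessian of $\rho$ is bounded on $\Gamma_{\e_0}$), so it contributes a bounded multiple of $X$ when $\rho(x)$ is bounded below, and when $\rho(x)$ is small one uses that $\nabla\rho$ varies slowly along the near-radial direction. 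The principal term requires $\rho(y)\le\rho(x)$ essentially (pointing into $\Omega$), which is exactly where Lemma \ref{lem-13-12-31-56} enters: for $x\in\Gamma_{\e_0}$ and $y\notin B_{d/2}(x)$ one has $\nabla\rho(x)\cdot\tilde X\ge\theta_0>0$, forcing the correct sign; for $y$ close to $x$ one instead Taylor-expands $\rho$ along the segment and uses $\rho(y)-\rho(x)\approx \nabla\rho(x)\cdot(y-x) + O(X^2)$.

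The heart of the matching is to compare $-\kappa/\rho(x)$ (the leading singular contribution, after noting $x$ is the point near the boundary where $X\approx \abs{\text{distance to }\p\Omega}$ in the worst direction) with $-4/(d-X)$ from \eqref{phi-1st-eigenfunc-est}. Since $\Phi(s)\ge -c_* - 4/(d-s)$ and $\rho(x)$ can be as small as $X$ only when $y$ lies on the far side, one needs $\kappa$ large enough that $\kappa/\rho$ dominates — more precisely, one chooses $\kappa = \kappa(n,\Omega)$ so that $-\kappa[\nabla\log\rho(y)-\nabla\log\rho(x)]\cdot\tilde X$, which is $\le 0$ in the relevant geometry by the above sign analysis, stays below $\Phi(X)+c_0X$; the constant $c_0$ then absorbs all the bounded error terms and depends on $\tilde V,\tilde\phi_1$ only through $c_*$ and $\norm{\tilde V}$. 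I would assemble these three regimes into a single estimate by taking $\kappa$ from the geometry, then $c_0$ as the maximum of the constants produced in (a), (b), (c).

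\textbf{Main obstacle.} The genuinely hard step is regime (c) with $x$ extremely close to $\p\Omega$ and $y$ on the opposite side of $\Omega$ (so $X$ is of order $d$ but $\rho(x)\to 0$): here $\nabla\log\rho^\kappa(x)$ blows up, and one must be sure that the sign of $-[\nabla\log\rho^\kappa(x)]\cdot\tilde X = +\kappa\nabla\rho(x)\cdot(y-x)/(\rho(x)\abs{y-x})$ is favourable — this is positive, hence a \emph{bad} sign for an upper bound — unless one also exploits that $\nabla\log\rho^\kappa(y)$ contributes with the opposite (good) sign of comparable size, or that $\Phi(X)$ is in fact very negative there. Reconciling these, i.e. showing the $\kappa$-term is dominated by $\Phi(X)+c_0X$ uniformly as $\rho(x)\to0$, is where Lemma \ref{lem-13-12-31-56} and the precise form \eqref{phi-1st-eigenfunc-est} of the bound on $\Phi$ both have to be used in concert; I expect this to be the only part requiring real care, the rest being routine mean-value and compactness arguments.
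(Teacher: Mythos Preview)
Your overall strategy is on the right track, but there are two concrete gaps.

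\textbf{Sign error in the ``main obstacle''.} You write that $-[\nabla\log\rho^\kappa(x)]\cdot\tilde X = +\kappa\,\nabla\rho(x)\cdot\tilde X/\rho(x)$, but the sign is wrong: it is $-\kappa\,\nabla\rho(x)\cdot\tilde X/\rho(x)$. Since Lemma~\ref{lem-13-12-31-56} gives $\nabla\rho(x)\cdot\tilde X\ge\theta_0>0$ when $x\in\Gamma_{\e_0}$ and $|y-x|\ge d/2$, this term is \emph{very negative}, not positive --- it is the favourable term, not the obstacle. In fact, when $X>d-\e_1$ (with $\e_1$ small), \emph{both} $x$ and $y$ lie in $\Gamma_{\e_1}$ (because $X\le d-\rho(x)-\rho(y)$ by extending the segment to $\p\Omega$), so Lemma~\ref{lem-13-12-31-56} applies symmetrically to give
\[
\bsqu{\nabla\log\rho^\kappa(y)-\nabla\log\rho^\kappa(x)}\cdot\tilde X
\ls -\kappa\theta_0\Bbra{\frac{1}{\rho(x)}+\frac{1}{\rho(y)}}
\ls -\frac{2\kappa\theta_0}{\rho(x)+\rho(y)}
\ls -\frac{2\kappa\theta_0}{d-X}.
\]
Choosing $\kappa=8/\theta_0$ and using \eqref{phi-1st-eigenfunc-est} then yields the left side $\ls \Phi(X)$ outright (no $c_0X$ needed here). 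This is the paper's entire treatment of the ``far'' regime.

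\textbf{Missing the concavity of $\rho$.} Your regime (c) --- $x$ near $\p\Omega$ and $y$ close to $x$ --- is where your Taylor-expansion plan is shakiest: the error terms you write down, e.g.\ $\kappa(\nabla\rho(y)-\nabla\rho(x))\cdot\tilde X/\rho(x)$, are of order $X/\rho(x)$ and can blow up when $\rho(x)\ll X$, while $\Phi(X)+c_0X$ stays bounded for small $X$. The clean way out, which you overlook, is that on a convex domain $\rho$ is concave (it is an infimum of affine functions), hence $\log\rho$ is concave as well, so
\[
\bsqu{\nabla\log\rho^\kappa(y)-\nabla\log\rho^\kappa(x)}\cdot\tilde X
=\frac{\kappa}{X}\int_0^1\nabla^2\log\rho\big|_{ty+(1-t)x}(y-x,y-x)\,\mathrm{d}t\ls 0
\]
for \emph{all} $x\ne y$. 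This single observation dispatches every case with $X\le d-\e_1$: there $\Phi$ is $C^1$ with $\Phi(0)=0$, so $\Phi(X)\ge -c_0X$ for a constant $c_0$ depending on $\tilde V,\tilde\phi_1$, and the inequality follows. With this, the paper needs only \emph{two} regimes ($X>d-\e_1$ and $X\le d-\e_1$), not three, and no Taylor expansion near the boundary at all.
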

\begin{proof}
Take $\e_1:=\min\{\e_0,4/(c_*+1)\}$, with the same constant
$c_*$ in proposition \ref{1st-eigenfunc-est}.

We must consider two possibilities: If $X>d-\e_1$, then
\[
x,\,y\in\Gamma_{\e_1}\cap\Omega\qquad\hbox{i.e.,}\qquad0<\rho(x),\,\rho(y)<\e_1,
\]
and
\[
d-\e_1<X\ls d-\rho(x)-\rho(y).
\]
Thus, $X>d/2$ and
\[
\rho(x)+\rho(y)\ls d-X<\e_1.
\]
Note that $\nabla\log\rho=\nabla\rho/\rho$, it is easy to see that
the left-hand side of \eqref{ineq-13-12-18-22-2} is less than
\begin{eqnarray}\label{ineq-13-12-19-15-1}
&&\kappa\Big[\frac{1}{\rho(y)}\cdot\nabla\rho(y)
-\frac{1}{\rho(x)}\cdot\nabla\rho(x)\Big]\cdot\tilde{X}\nonumber\\
&=&-\kappa\Big\{\frac{1}{\rho(x)}\cdot\big(\nabla\rho(x)\cdot\tilde{X}\big)
+\frac{1}{\rho(y)}\cdot\big[\nabla\rho(y)\cdot(-\tilde{X})\big]\Big\}
\end{eqnarray}

On the other hand, Lemma \ref{lem-13-12-31-56} implies that
\[
\nabla\rho(x)\cdot\tilde{X}\gs\theta_0\qquad\hbox{and}\qquad
\nabla\rho(y)\cdot(-\tilde{X})\gs\theta_0.
\]
Taking $\kappa=\kappa(n,\Omega):=8/\theta_0$
and using \eqref{phi-1st-eigenfunc-est}, we thus derive that
\begin{eqnarray*}
\Big[\nabla\log\rho^\kappa(y)-\nabla\log\rho^\kappa(x)\Big]\cdot\tilde{X}
&\ls&-\kappa\theta_0\Big[\frac{1}{\rho(x)}+\frac{1}{\rho(y)}\Big]\\
&<&-\frac{8}{\rho(x)+\rho(y)}\ls-\frac{8}{d-X}\\
&\ls&\Phi(X)+c_*-\frac{4}{d-X}\\
&\ls&\Phi(X)+c_*-\frac{4}{\e_1}<\Phi(X).
\end{eqnarray*}
So, the inequality \eqref{ineq-13-12-18-22-2} holds for this case.

The second possibility is that if $0<X\ls d-\e_1$, then
\begin{equation}\label{ineq-13a-1-8-10-8}
\pm\Phi(X)\ls c_1X,
\end{equation}
with $c_0=c_0(n,\Omega,\tilde{V},\tilde{\phi}_1)$.
Indeed, we derive from \eqref{eqn-14-5-23a-13-22} that
\[
\|\Phi'\|_{C([0,d-\e_1])}\ls\|\tilde{V}\|_{C([0,\frac{d}{2}])}+\tilde{\lambda}_1
+\|\tilde{\phi}'_1/\tilde{\phi}_1\|^2_{C([0,(d-\e_1)/2])}=:c_0.
\]
In addition, since $\tilde{\phi}_1$ is an even function,
then $\tilde{\phi}'_1(0)=0$. Thus $\Phi(0)=0$. Therefore,
\[
\pm\Phi(X)\ls\|\Phi'\|_{C([0,d-\e_1])}\cdot X\ls c_0X.
\]

On the other hand, we have
\begin{equation}\label{eq-13-11-29-22-15}
\bsqu{\nabla\log\rho^\kappa(y)-\nabla\log\rho^\kappa(x)}\cdot\tilde{X}\\
=\frac{\kappa}{X}\int_0^1\nabla^2\log\rho\big|_{ty+(1-t)x}(y-x,y-x)\,\mathrm{d}t.
\end{equation}
It is well known that the distance function $\rho(x)$ is concave
for a convex domain (since it is as the infimum for a family of planes),
hence $\nabla^2\log\rho\ls0$ in $\Omega$. Then \eqref{eq-13-11-29-22-15} together with this fact yield that
\begin{equation}\label{inq-13a-1-8-10-13}
\bsqu{\nabla\log\rho^\kappa(y)-\nabla\log\rho^\kappa(x)}\cdot\tilde{X}\ls0,
\end{equation}
Finally, we can derive the desired estimate from \eqref{ineq-13a-1-8-10-8}
and \eqref{inq-13a-1-8-10-13} in this case. So far we complete the proof.
\end{proof}

\begin{rem}
From the proof of Proposition \ref{prop-13a-1-2-12-1}, it is easy to see that
\[
\limsup_{x\rightarrow\p\Omega\atop(\textrm{or}\,\,y\rightarrow\p\Omega)}
\Big\{\bsqu{\nabla\log\rho^\kappa(y)-\nabla\log\rho^\kappa(x)}\cdot\tilde{X}-\Phi(X)-c_0X\Big\}\ls0,
\]
holds for every $x\neq y$ in $\Omega$.
\end{rem}

As a consequence we will construct an auxiliary function $u_0(x)$
(which is first due to Lu \cite{Lu-Seminar-May-26-2011}), so that
$\log\tilde{\phi}_1$ is a modulus of concavity of $u_0(x)$. This result
has already been used to prove Theorem \ref{sharp-log-concavity-estimate} in Section 3.
\begin{cor}\label{cor-13a-1-1-7-33}
Let $\Omega$ and $d$ be as in Lemma \ref{lem-13-12-31-56}. For $x\in\Omega$, let us define a function
\begin{equation}\label{initial-data-function}
u_0(x):=e^{-c_0\abs{x}^2/2}\rho^\kappa(x),
\end{equation}
where $c_0=c_0(n,\Omega,\tilde{V},\tilde{\phi}_1)$ and $\kappa=\kappa(n,\Omega)$ are as in Proposition \ref{prop-13a-1-2-12-1}.
Then $u_0$ satisfies
\begin{equation}\label{ineq-13a-1-2-12-16}
\bsqu{\nabla\log u_0(y)-\nabla\log u_0(x)}\cdot\tilde{X}\ls\Phi(X),
\end{equation}
for every $x\neq y$ in $\Omega$. Here $\Phi(s)$ is defined as before.
\end{cor}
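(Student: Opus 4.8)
The plan is to reduce the claim about $u_0(x) := e^{-c_0|x|^2/2}\rho^\kappa(x)$ to Proposition \ref{prop-13a-1-2-12-1} by exploiting the fact that the logarithm turns the product into a sum. First I would compute $\nabla\log u_0(x) = -c_0 x + \nabla\log\rho^\kappa(x)$, so that
\[
\bsqu{\nabla\log u_0(y)-\nabla\log u_0(x)}\cdot\tilde{X}
= -c_0(y-x)\cdot\tilde{X} + \bsqu{\nabla\log\rho^\kappa(y)-\nabla\log\rho^\kappa(x)}\cdot\tilde{X}.
\]
Since $\tilde X = (y-x)/|y-x|$, the first term is exactly $-c_0|y-x| = -c_0 X$. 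Then Proposition \ref{prop-13a-1-2-12-1} gives
\[
\bsqu{\nabla\log\rho^\kappa(y)-\nabla\log\rho^\kappa(x)}\cdot\tilde{X}\ls\Phi(X)+c_0X,
\]
and adding the two contributions yields $\bsqu{\nabla\log u_0(y)-\nabla\log u_0(x)}\cdot\tilde{X}\ls\Phi(X)$, which is \eqref{ineq-13a-1-2-12-16}. So the whole corollary is essentially a one-line consequence once the quadratic Gaussian factor $e^{-c_0|x|^2/2}$ has been introduced precisely to absorb the linear-in-$X$ error term $c_0X$ from the proposition.

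The key steps, in order, are: (1) take logarithms and differentiate the explicit formula \eqref{initial-data-function} for $u_0$; (2) observe that the $|x|^2$ term contributes a pure gradient $-c_0 x$ whose difference, dotted with the unit vector $\tilde X$, collapses to $-c_0 X$; (3) invoke Proposition \ref{prop-13a-1-2-12-1} for the remaining $\rho^\kappa$ piece; (4) add and cancel. I would also note at the outset that $u_0 > 0$ in $\Omega$ (since $\rho > 0$ there) and that $u_0 \in W_0^{1,2}(\Omega)$ because $\rho$ vanishes on $\p\Omega$ and $\kappa \ge 1$, so that $u_0$ is a legitimate initial datum for the heat flow \eqref{heat-flow-13-12-3-21-21} used in the proof of Theorem \ref{sharp-log-concavity-estimate}; this is really where the constants $\kappa = \kappa(n,\Omega)$ and $c_0 = c_0(n,\Omega,\tilde V,\tilde\phi_1)$ of the proposition get used.

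There is essentially no obstacle here: all the genuine work — the distance-function geometry near $\p\Omega$ from Lemma \ref{lem-13-12-31-56}, the concavity $\nabla^2\log\rho\ls0$, the bound $\|\Phi'\|_{C([0,d-\e_1])}\ls c_0$, and the case split on whether $X > d-\e_1$ or $X\ls d-\e_1$ — has already been carried out in Proposition \ref{prop-13a-1-2-12-1}. The only point requiring a word of care is the behavior as $x$ or $y$ approaches $\p\Omega$: strictly speaking \eqref{ineq-13a-1-2-12-16} is asserted for $x\neq y$ in the open set $\Omega$, so no boundary limit is needed, but if one wanted the estimate to persist in the closure one would appeal to the Remark following Proposition \ref{prop-13a-1-2-12-1}, which records that the $\limsup$ of the relevant quantity as $x$ or $y \to \p\Omega$ is $\ls 0$. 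Thus the proof of the corollary is short; I would simply write out steps (1)–(4) and append the remark on positivity and the $W_0^{1,2}$ membership of $u_0$.
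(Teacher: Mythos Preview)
Your proposal is correct and follows essentially the same approach as the paper: compute $\nabla\log u_0(x)=-c_0x+\nabla\log\rho^\kappa(x)$, note that the quadratic part contributes exactly $-c_0X$ when differenced and dotted with $\tilde X$, and then apply Proposition \ref{prop-13a-1-2-12-1} to the $\rho^\kappa$ part so that the $c_0X$ terms cancel. The additional remarks you make about positivity, $W_0^{1,2}$ membership, and boundary behavior are not in the paper's proof of this corollary but are correct and harmless.
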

\begin{proof}
We compute directly,
\begin{eqnarray*}
\nabla\log u_0(x)&=&\nabla\bsqu{-\frac{c_0}{2}\abs{x}^2+\log\rho^\kappa(x)}\\
&=&-c_0\abs{x}\cdot\frac{x}{\abs{x}}+\nabla\log\rho^\kappa(x)\\
&=&-c_0x+\nabla\log\rho^\kappa(x),
\end{eqnarray*}
and
\begin{eqnarray}\label{eq-14-6-11-22-33}
&&\bsqu{\nabla\log u_0(y)-\nabla\log u_0(x)}\cdot\tilde{X}\nonumber\\
&=&\big[-c_0y+\nabla\log\rho^\kappa(y)+c_0x-\nabla\log\rho^\kappa(x)\big]\cdot\tilde{X}\nonumber\\
&=&-c_0(y-x)\cdot\tilde{X}+\bsqu{\nabla\log\rho^\kappa(y)-\nabla\log\rho^\kappa(x)}\cdot\tilde{X}\nonumber\\
&=&-c_0X+\bsqu{\nabla\log\rho^\kappa(y)-\nabla\log\rho^\kappa(x)}\cdot\tilde{X}.
\end{eqnarray}
Here we used
\[
(y-x)\cdot\tilde{X}=(y-x)\cdot\frac{y-x}{\abs{y-x}}
=\abs{y-x}=X.
\]
Finally, we derive \eqref{ineq-13a-1-2-12-16} from \eqref{eq-14-6-11-22-33}
and \eqref{ineq-13-12-18-22-2} immediately. The proof is complete.
\end{proof}

\begin{rem}
Similarly, function $u_0$ is also of the following boundary asymptotic property:
\[
\limsup_{x\rightarrow\p\Omega\atop(\textrm{or}\,\,y\rightarrow\p\Omega)}\Big\{
\big[\nabla\log u_0(y)-\nabla\log u_0(x)\big]\cdot\tilde{X}-\Phi(X)\Big\}\ls0,
\]
holds for every $x\neq y$ in $\Omega$.
\end{rem}

\subsection*{Acknowledgment}
The author is deeply enlightened by Professor Zhi-Qin Lu's research talks \cite{Lu-Seminar-May-26-2011},
so he is extremely grateful to Professor Lu for the valuable ideas and methods in
\cite{Lu-Seminar-May-26-2011}, also the interesting discussions concerning eigenvalue gap theorems.


\bigbreak

\noindent{\sc Yue He},
Institute of Mathematics, School of Mathematics Sciences,
Nanjing Normal University, Nanjing, 210023, China\\
e-mail: heyueyn@163.com \& heyue@njnu.edu.cn


\end{document}